\newtheorem{theo}{Th\'eor\`eme}[section]
\newtheorem{prop}[theo]{Proposition}
\newtheorem{lem}[theo]{Lemme}
\newtheorem{cor}[theo]{Corollaire}
\newtheorem*{theo*}{Th\'eor\`eme}
\renewcommand{\a}{\alpha}
\renewcommand{\P}{\mathbb{P}} 
\newcommand{\N}{\mathbb{N}}  
\newcommand{\kk}{\mathbf{k}}
\renewcommand{\Im}{\,\mathrm{Im}\,}
\renewcommand{\O}{\mathcal{O}}                       
\newcommand{\I}{\mathcal{I}} 
\newcommand{\E}{\mathcal{E}}
\newcommand{\F}{\mathcal{F}}
\newcommand{\G}{\mathcal{G}}
\renewcommand{\H}{\mathcal{H}}
\newcommand{\U}{\mathcal{U}}
\DeclareMathOperator{\RatCurves}{RatCurves}
\DeclareMathOperator{\Univ}{Univ} 
\DeclareMathOperator{\birat}{bir}
\DeclareMathOperator{\Pic}{Pic}   
\DeclareMathOperator{\Hom}{Hom}
\DeclareMathOperator{\codim}{codim}
\DeclareMathOperator{\rg}{rg}      
\DeclareMathOperator{\Sym}{Sym} 
\title{\textbf{ Caract\'erisations des espaces projectifs et des quadriques}}
\date{}
\author{Matthieu Paris\\
\small{Institut Fourier} \\
\small{Universit\'e Joseph Fourier Grenoble 1}\\
\small{38 402 Saint Martin d'H\`eres, France }\\
\small{e-mail: matthieu.paris@ujf-grenoble.fr}}
\begin{document}
\maketitle

\section{Introduction}

L'objectif de ce texte est de caract\'eriser les espaces projectifs et les quadriques par des propri\'et\'es de positivit\'e de leur fibr\'e tangent. De nombreux r\'esultats existent d\'ej\`a en ce sens, et le lecteur pourra trouver dans \cite{ADK} une pr\'esentation des principaux d'entre eux. Dans cet article, Araujo, Druel et Kov\'acs d\'emontrent la caract\'erisation  suivante, issue d'une conjecture de Beauville (\cite{Bea}).

\begin{theo*}\cite[Theorem 1.1, Theorem 6.3]{ADK}
Soient $X$ une vari\'et\'e projective lisse complexe de dimension $n\geq 1$, et $L$ un fibr\'e en droites ample sur $X$. Soit $p\geq 1$ un entier. On suppose que l'une des deux hypoth\`eses suivantes est satisfaite :
\begin{enumerate}
	\item $ H^0(X, \wedge ^p T_X \otimes L^{-p}) \neq 0$
	\item $H^0(X, T_X ^{\otimes p} \otimes L^{-p}) \neq 0 \ \text{ et }\ \rho(X)=1$
\end{enumerate}

Alors ou bien  $( X , L) \cong (\P^1, \O_{\P^1}(2))$, ou bien $( X , L) \cong (\P^n, \O_{\P^n}(1))$, ou bien $( X , L) \cong (Q_n, \O_{Q_n}(1))$, o\`u $Q_n\subset \P^{n+1}$ est une quadrique lisse de dimension $n$. 
\end{theo*}  

On se propose ici de g\'en\'eraliser ce r\'esultat en affaiblissant ses hypoth\`eses de la fa\c{c}on suivante. 
\begin{theo}\label{theo:1}
Soient $X$ une vari\'et\'e projective lisse complexe de dimension $n\geq 1$, et $L$ un fibr\'e en droites ample sur $X$. Soit $p\geq 1$ un entier. On suppose que 
\[
H^0(X, T_X ^{\otimes p} \otimes L^{-p}) \neq 0
\]
Alors la conclusion du th\'eor\`eme ci-dessus est encore vraie. 
\end{theo}

Pour d\'emontrer ce th\'eor\`eme, on commence par remarquer que $X$ est unir\'egl\'ee d'apr\`es le th\'eor\`eme de Miyaoka (\cite[Corollary 8.6]{Miy}). On choisit alors sur $X$ une famille couvrante minimale de courbes rationnelles $H$, dont on montre dans un premier temps qu'elle est compl\`ete. On proc\`ede ensuite \`a l'\'etude du quotient $H$-rationnellement connexe (ou quotient $H$-rc) $\pi_0 :X_0 \rightarrow Y_0$ de $X$ (on renvoie encore une fois le lecteur \`a \cite{ADK} pour les d\'efinitions et les propri\'et\'es de ces objets). Cette \'etude permet de se ramener \`a deux cas particuliers: le cas d'un fibr\'e projectif sur une courbe, et le cas d'un fibr\'e en quadriques sur une courbe. Ces deux cas font l'objet de th\'eor\`emes d'annulation (th\'eor\`emes \ref{theo:3} et \ref{theo:4}) qui donnent la conclusion. 
 
On montrera \'egalement par les m\^emes m\'ethodes la caract\'erisation des espaces projectifs suivante, qui constitue un outil important dans la d\'emonstration du th\'eor\`eme \ref{theo:1}.

\begin{theo}\label{theo:2}
Soient $X$ une vari\'et\'e projective lisse complexe de dimension $n\geq 1$, et $L$ un fibr\'e en droites ample sur $X$. Soient $p\geq 1$ et $k>p$ deux entiers.
On suppose que 
\[
H^0(X, T_X ^{\otimes p} \otimes L^{-k}) \neq 0
\]
Alors $X\cong \P^n$, et $L\cong \O_{\P^n}(1)$. 
\end{theo}

\textit{ Organisation du texte. } La section $2$ rassemble quelques r\'esultats pr\'eliminaires, puis on montre 
dans la section $3$ deux th\'eor\`emes d'annulation sur les fibr\'es projectifs et fibr\'es en quadriques. La section 4 est consacr\'ee \`a la preuve des deux r\'esultats principaux \ref{theo:1} et \ref{theo:2}.

\textit{ Conventions et notations. }  Toutes les vari\'et\'es consid\'er\'ees ici seront d\'efinies sur le corps des nombres complexes. Si $X$ est une vari\'et\'e et si $x\in X$ on notera $\kk(x)=\O_{X,x}/ \mathfrak{m}_{X,x}$ le corps r\'esiduel. 
Si $E$ est un fibr\'e vectoriel sur $X$, $\P(E)$ d\'esignera le fibr\'e projectif des hyperplans dans les fibres de $E$, autrement dit $\P(E)=\textbf{Proj}( \Sym(E))$. 

\section{R\'esultats pr\'eliminaires}
Le premier lemme de cette section d\'ecrit les puissances tensorielles d'un fibr\'e vectoriel apparaissant au milieu d'une suite exacte courte.
  
\begin{lem}\label{lem:1}
Soit $X$ une vari\'et\'e, et $K,F,G$ des fibr\'es vectoriels sur $X$. On suppose qu'on a une suite exacte
\[
0 \rightarrow K \rightarrow F \stackrel{\pi}{\rightarrow} G  \rightarrow 0.
\]  
Alors il existe pour tout entier $p\geq 1$ une filtration de $F^{\otimes p}$ par des sous-fibr\'es vectoriels
\[
F^{\otimes p}=F_p^0 \supset F_p^1 \supset \cdots \supset F_p^m \supset F_p^{m+1}=0
\]
ayant la propri\'et\'e suivante : 
\[ \forall l \in \{0,\ldots,m\} \ , \ \exists i\in  \{0,\ldots,p\} \ , \ F_p^l/F_p^{l+1} \cong  K^{\otimes i} \otimes  G^{\otimes p-i}. \]
\end{lem}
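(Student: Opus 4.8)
The plan is to argue by induction on $p$. For $p=1$ there is nothing to do: the given sequence itself is the filtration $F^{\otimes 1}=F\supset K\supset 0$, whose successive quotients $F/K\cong G=K^{\otimes 0}\otimes G^{\otimes 1}$ and $K=K^{\otimes 1}\otimes G^{\otimes 0}$ already have the required shape (here $m=1$).

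For the inductive step, assume the statement for $p-1$, so that $F^{\otimes(p-1)}$ admits a filtration $F^{\otimes(p-1)}=F_{p-1}^{0}\supset F_{p-1}^{1}\supset\cdots\supset F_{p-1}^{m'+1}=0$ by vector subbundles with $F_{p-1}^{l}/F_{p-1}^{l+1}\cong K^{\otimes i_l}\otimes G^{\otimes(p-1-i_l)}$ for suitable $i_l\in\{0,\ldots,p-1\}$. Since $F$ is locally free, tensoring this chain on the right by $F$ preserves exactness and the property of being a subbundle with locally free successive quotient; hence $F^{\otimes p}=F^{\otimes(p-1)}\otimes F$ carries the filtration $F^{\otimes p}=F_{p-1}^{0}\otimes F\supset F_{p-1}^{1}\otimes F\supset\cdots\supset 0$, whose graded pieces are $E_l\otimes F$, where I write $E_l:=K^{\otimes i_l}\otimes G^{\otimes(p-1-i_l)}$.

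It remains to refine each step. Tensoring $0\to K\to F\to G\to 0$ by the locally free sheaf $E_l$ yields a short exact sequence $0\to E_l\otimes K\to E_l\otimes F\to E_l\otimes G\to 0$. Let $W_l\subset F_{p-1}^{l}\otimes F$ be the preimage of the subbundle $E_l\otimes K\subset E_l\otimes F$ under the surjection $F_{p-1}^{l}\otimes F\twoheadrightarrow E_l\otimes F$; then $W_l$ is a vector subbundle containing $F_{p-1}^{l+1}\otimes F$, with $(F_{p-1}^{l}\otimes F)/W_l\cong E_l\otimes G\cong K^{\otimes i_l}\otimes G^{\otimes(p-i_l)}$ and $W_l/(F_{p-1}^{l+1}\otimes F)\cong E_l\otimes K\cong K^{\otimes(i_l+1)}\otimes G^{\otimes(p-1-i_l)}$. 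Since $i_l$ and $i_l+1$ both lie in $\{0,\ldots,p\}$, inserting each $W_l$ between $F_{p-1}^{l}\otimes F$ and $F_{p-1}^{l+1}\otimes F$ produces a filtration of $F^{\otimes p}$ with all graded pieces of the prescribed form, which completes the induction.

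I do not anticipate a real obstacle: the argument is formal. The only point demanding a little care is the bookkeeping showing that each refinement step remains a \emph{subbundle} with locally free quotients, which is guaranteed by the flatness (local freeness) of $K$, $F$, $G$ and $E_l$, so that tensoring the ambient short exact sequences stays exact and the preimages $W_l$ are genuine subbundles. One could instead avoid the explicit preimage construction by running a single induction that combines the two short exact sequences directly, but staging the proof as ``tensor the inductive filtration by $F$, then refine by the original sequence'' keeps the exponents $i$ transparent.
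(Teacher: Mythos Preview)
Your proof is correct and follows essentially the same inductive approach as the paper. The only difference is organizational: the paper first splits $F^{\otimes p}$ via $0\to K\otimes F^{\otimes(p-1)}\to F^{\otimes p}\to G\otimes F^{\otimes(p-1)}\to 0$ and then applies the inductive filtration of $F^{\otimes(p-1)}$ to each half, whereas you tensor the inductive filtration by $F$ first and then refine each step with the original sequence---a harmless reordering yielding the same graded pieces.
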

\begin{proof}
On proc\`ede par r\'ecurrence sur $p$. Si $p=1$ on obtient la filtration voulue en posant $F_1^0:=F$,  $F_1^1:=K$, et $F_1^2 :=0$.  

Si $p\geq 2$ on suppose qu'il existe une filtration $F^{\otimes p-1}$ par des sous-fibr\'es vectoriels
\[
F^{\otimes {p-1}}=F_{p-1}^0 \supset F_{p-1}^1 \supset \cdots \supset F_{p-1}^m \supset F_{p-1}^{m+1}=0
\] 
ayant la propri\'et\'e demand\'ee. On a une suite exacte de fibr\'es vectoriels
\[
0 \rightarrow K \otimes F^{\otimes p-1} \rightarrow F^{\otimes p} \stackrel{\varphi}{\rightarrow} G \otimes F^{\otimes p-1} \rightarrow 0.
\]
On pose pour tout $l\in \{0, \ldots m+1\}$, 
\[
F_p^l:= \varphi^{-1}(G \otimes F_{p-1}^l),
\] 
et pour tout  $l\in \{m+1, \ldots 2m+2\}$,
\[
F_p^l:= K \otimes F_{p-1}^{l-m-1}
\]
On notera que pour $l=m+1$ les deux d\'efinitions coïncident, on a $F_p^{m+1} =  K \otimes F^{\otimes p-1}$.

On a alors si $l\in \{0, \ldots m\}$
\[
F_p^l/F_p^{l+1} \cong \varphi^{-1}(G \otimes F_{p-1}^l) / \varphi^{-1}(G \otimes F_{p-1}^{l+1}) \cong G \otimes (F_{p-1}^l/ F_{p-1}^{l+1}) \cong G \otimes K^{\otimes i} \otimes  G^{\otimes p-i-1},
\]
pour un entier $i\in\{0, \ldots , p-1\}$. De m\^eme si $l\in \{m+1, \ldots 2m+1\}$ on a 
\[
F_p^l/F_p^{l+1} \cong(K \otimes F_{p-1}^{l-m-1}) / (K \otimes F_{p-1}^{l-m}) \cong K \otimes (F_{p-1}^{l-m-1}/  F_{p-1}^{l-m})\cong K \otimes K^{\otimes j} \otimes  G^{\otimes p-j-1},
\]
pour un entier $j\in\{0, \ldots , p-1\}$. 
On a donc une filtration 
\[
F^{\otimes p}=F_p^0 \supset F_p^1 \supset \cdots \supset F_p^{2m+1} \supset F_p^{2m+2}=0
\]
avec des quotients de la forme voulue.
\end{proof}

On en d\'eduit le r\'esultat suivant.

\begin{cor}\label{cor:1}
Soient $\pi: X \rightarrow Y$ un morphisme lisse entre deux vari\'et\'es lisses, et $L$ un fibr\'e en droites sur $X$.  Soit $p\geq 1$ un entier tel que 
\[
H^0(X, T_X ^{\otimes p} \otimes L) \neq 0.
\]
Alors il existe un entier $i \in \{0,\ldots,p\}$  tel que 
\[
H^0(X, T_{X/Y}^{\otimes i} \otimes  (\pi^\ast T_Y)^{\otimes p-i}\otimes L) \neq 0.
\]
\end{cor}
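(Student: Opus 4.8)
The plan is to apply Lemma \ref{lem:1} to the relative tangent sequence of the smooth morphism $\pi$, and then use the fact that a nonzero global section of a vector bundle equipped with a filtration by subbundles must induce a nonzero global section of at least one of the graded pieces. Since $\pi$ is a smooth morphism between smooth varieties, we have the short exact sequence of vector bundles on $X$
\[
0 \rightarrow T_{X/Y} \rightarrow T_X \rightarrow \pi^\ast T_Y \rightarrow 0.
\]
Applying Lemma \ref{lem:1} with $K = T_{X/Y}$, $F = T_X$, $G = \pi^\ast T_Y$, we obtain a filtration
\[
T_X^{\otimes p} = F_p^0 \supset F_p^1 \supset \cdots \supset F_p^m \supset F_p^{m+1} = 0
\]
by subbundles, each quotient $F_p^l / F_p^{l+1}$ being isomorphic to $T_{X/Y}^{\otimes i} \otimes (\pi^\ast T_Y)^{\otimes p-i}$ for some $i \in \{0, \ldots, p\}$ depending on $l$.

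Next I would tensor the whole filtration by the line bundle $L$, which is exact and preserves the property of being a filtration by subbundles; the graded pieces become $T_{X/Y}^{\otimes i} \otimes (\pi^\ast T_Y)^{\otimes p-i} \otimes L$. Then I take a nonzero section $s \in H^0(X, T_X^{\otimes p} \otimes L)$ and let $l$ be the largest index such that $s$ lies in $H^0(X, F_p^l \otimes L)$ (this is well-defined since $s \in H^0(X, F_p^0 \otimes L)$ and $s \notin H^0(X, F_p^{m+1} \otimes L) = 0$). By maximality, the image of $s$ under the map $H^0(X, F_p^l \otimes L) \rightarrow H^0(X, (F_p^l/F_p^{l+1}) \otimes L)$ induced by the quotient is nonzero, so $H^0(X, (F_p^l/F_p^{l+1}) \otimes L) \neq 0$. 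By the property of the filtration, $(F_p^l/F_p^{l+1}) \otimes L \cong T_{X/Y}^{\otimes i} \otimes (\pi^\ast T_Y)^{\otimes p-i} \otimes L$ for some $i \in \{0, \ldots, p\}$, which gives the desired conclusion.

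There is essentially no serious obstacle here: the only point requiring a little care is the standard observation that if $0 \to A \to B \to C \to 0$ is an exact sequence of sheaves and $t \in H^0(B)$ maps to $0$ in $H^0(C)$, then $t$ comes from $H^0(A)$ — this is just left-exactness of global sections, applied inductively down the filtration to justify that the "largest index" argument produces a nonzero section of a graded quotient. I should also note explicitly that tensoring a short exact sequence of vector bundles (hence locally split) by a line bundle keeps it exact, so that the preimages $\varphi^{-1}(\cdot)$ in the construction of Lemma \ref{lem:1} behave well after twisting; this is immediate and needs no more than a sentence.
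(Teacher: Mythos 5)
Your proof is correct and follows exactly the route of the paper: apply Lemma \ref{lem:1} to the relative tangent sequence $0 \to T_{X/Y} \to T_X \to \pi^\ast T_Y \to 0$, twist the resulting filtration by $L$, and conclude that a nonzero section of $T_X^{\otimes p}\otimes L$ forces a nonzero section of some graded piece. Your "largest index" argument just makes explicit the step the paper states in one line, so there is nothing to change.
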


\begin{proof} 
On a la suite exacte suivante
\[
0 \rightarrow  T_{X/Y} \rightarrow T_X \rightarrow \pi^\ast T_Y \rightarrow 0.
\]
Par cons\'equent d'apr\`es le lemme \ref{lem:1} il existe une filtration de $T_X^{\otimes p}\otimes L$ 
\[
 T_X^{\otimes p} \otimes L = F^0 \supset F^1 \supset \cdots \supset F^m \supset F^{m+1}=0
 \]
v\'erifiant que pour tout $l \in \{0,\ldots,m\}$ il existe $i\in\{0, \cdots p\}$ tel que $F^l/F^{l+1} \cong  T_{X/Y}^{\otimes i} \otimes  (\pi^\ast T_Y)^{\otimes p-i} \otimes L$. Or on a $H^0(X,F^0) \neq 0$, donc il existe un entier $l \in \{0,\ldots,m\}$ tel que $H^0(X,F^l/F^{l+1}) \neq 0$, ce qui donne le r\'esultat. 
\end{proof}

Il sera int\'eressant par la suite de conna\^itre les conditions sur des entiers $n,p,k$ pour que le fibr\'e vectoriel $T_{\P^n}^{\otimes p}(-k)$ ait des sections globales non nulles. Les voici :

\begin{lem}\label{lem:2}
Soient $n,p,k$ trois entiers, avec $n\geq 1$ et $p\geq 1$. Alors
\[
H^0(\P^n,  T_{\P^n}^{\otimes p}(-k)) \neq 0 \quad \Longleftrightarrow \quad k \leq \frac{p(n+1)}{n}
\]
\end{lem}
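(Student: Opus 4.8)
The plan is to compute $H^0(\P^n, T_{\P^n}^{\otimes p}(-k))$ explicitly by using the Euler sequence and passing to symmetric/tensor powers. Recall the Euler sequence
$$0 \to \O_{\P^n} \to \O_{\P^n}(1)^{\oplus(n+1)} \to T_{\P^n} \to 0.$$
Twisting by $\O(-1)$ and dualizing is one route, but the cleanest is to note that $T_{\P^n}(-1)$ is globally generated with $H^0(\P^n, T_{\P^n}(-1)) \cong \C^{n+1}$... actually more useful: $T_{\P^n}$ is a quotient of $\O(1)^{\oplus(n+1)}$, so $T_{\P^n}^{\otimes p}(-k)$ is a quotient of $\O(p-k)^{\oplus(n+1)^p}$, giving the bound $k \le p$ is far too weak. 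So instead I would compute cohomology directly.

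First I would establish the "only if" direction, which is the substantive one, via the following standard fact: for a vector bundle $E$ on $\P^n$ fitting in $0 \to A \to B \to E \to 0$ with $A,B$ sums of line bundles, one gets a Koszul-type resolution, but for tensor powers it is cleaner to use that $H^0(\P^n, T_{\P^n}^{\otimes p}(-k))$ injects into $H^0$ of successive quotients. Actually the most efficient approach: dualize the Euler sequence to get $0 \to \Omega^1_{\P^n} \to \O(-1)^{\oplus(n+1)} \to \O \to 0$, and observe $T_{\P^n}^{\otimes p}(-k) = (\Omega^1_{\P^n})^{\vee \otimes p}(-k)$. Then I would use the projection formula on $\P^n = \P(V)$ with $V = \C^{n+1}$: sections of $T_{\P^n}^{\otimes p}(-k)$ can be identified with a subquotient of $\mathrm{Sym}$-type representations of $\mathrm{GL}(V)$. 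Concretely, I expect the answer to come from: $H^0(\P^n, T_{\P^n}^{\otimes p} \otimes \O(-k)) \ne 0$ iff after pushing forward along $\P^n \to \mathrm{pt}$ the relevant weight appears, and the extremal case $k = p(n+1)/n$ corresponds (when $n \mid p$, say $p = mn$) to a section built from the canonical bundle: $\omega_{\P^n} = \O(-n-1)$, so $\wedge^n T_{\P^n} = T_{\P^n}^{\wedge n} = \O(n+1)$, hence $(\wedge^n T_{\P^n})^{\otimes m} = \O(m(n+1))$ has a nonzero section, and $(\wedge^n T_{\P^n})^{\otimes m}$ maps into $T_{\P^n}^{\otimes mn}$ via antisymmetrization, so $T_{\P^n}^{\otimes mn} \otimes \O(-m(n+1)) = T_{\P^n}^{\otimes p}(-k)$ with $k = m(n+1) = p(n+1)/n$ has a nonzero section. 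For general $p$ one writes $p = mn + r$ with $0 \le r < n$ and combines this with $H^0(\P^n, T_{\P^n}^{\otimes r}(-r)) \ne 0$ (since $T_{\P^n}(-1)$ is globally generated, hence $T_{\P^n}^{\otimes r}(-r)$ is too), noting $mr(n+1)/n$ rounds correctly; one must check $\lfloor p(n+1)/n \rfloor \ge m(n+1) + \lceil r \cdot \text{something}\rceil$ — a short arithmetic verification.

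For the converse (vanishing when $k > p(n+1)/n$), I would argue by bounding the slope: if $s \in H^0(\P^n, T_{\P^n}^{\otimes p}(-k))$ is a nonzero section, then on a general line $\ell \cong \P^1$, $T_{\P^n}|_\ell \cong \O(2) \oplus \O(1)^{\oplus(n-1)}$, so $(T_{\P^n}^{\otimes p}(-k))|_\ell$ is a sum of $\O(a - k)$ with $a \le 2p$; a nonzero section restricting to $\ell$ as nonzero would need $a \ge k$, giving only $k \le 2p$. That is too weak, so instead I would restrict more carefully: the point is that the section $s$ generates, in each fiber, a tensor that lies in the image of a map from line bundles of controlled degree. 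The right tool is: filter $T_{\P^n}^{\otimes p}$ using the Euler sequence $0 \to \O \to \O(1)^{n+1} \to T_{\P^n} \to 0$ and Lemma 2.1, so $T_{\P^n}^{\otimes p}$ has a filtration with graded pieces $\O(i) \otimes$ (something from $\O^{\otimes(p-i)} = \O$), i.e. pieces $\cong \O(i)$ for $i$ ranging appropriately — wait, that gives $\O(i)$ with $i$ up to $p$, again too weak. The genuine obstruction is that $T_{\P^n}$ is not a subbundle of a sum of line bundles, only a quotient; so the clean statement is via the dual: $\Omega^1_{\P^n} \subset \O(-1)^{n+1}$, hence $\Omega^1_{\P^n}{}^{\otimes p} \subset \O(-p)^{(n+1)^p}$, which gives $H^0(\Omega^{1\otimes p}_{\P^n}(k)) \hookrightarrow H^0(\O(k-p))^{\oplus \cdots}$, i.e. the WRONG twist. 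I expect the correct vanishing proof to go through the \emph{Bott formula} / Borel–Weil–Bott: decompose $T_{\P^n}^{\otimes p}$ into irreducible $\mathrm{GL}_{n+1}$-homogeneous bundles $\Sigma^\lambda$ with $|\lambda| = p$ shifted by the right twist, and invoke that $H^0(\P^n, \Sigma^\lambda T_{\P^n}(-k)) \ne 0$ forces $k \le$ the maximal value, which for $\lambda = (1,\dots,1,0,\dots)$ (the $\wedge^n$ piece, most "efficient" per unit weight) gives exactly $k \le p(n+1)/n$.

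The main obstacle I anticipate is the vanishing direction: getting the \emph{sharp} bound $p(n+1)/n$ rather than the easy $2p$. The honest way is a representation-theoretic computation (Littlewood–Richardson decomposition of $(\C^{n+1})^{\otimes p}$ twisted down, then Bott), or an induction on $n$ via a suitable hyperplane/flag restriction that tracks degrees carefully. I would most likely present it as: reduce to irreducible summands $\Sigma^\lambda T_{\P^n}$, compute $H^0(\P^n, \Sigma^\lambda T_{\P^n}(-k))$ by Bott's formula (which expresses it in terms of whether a certain weight is dominant after the $\rho$-shift), and then optimize over partitions $\lambda$ of $p$ to see that the maximum attainable $k$ is $\lfloor p(n+1)/n\rfloor$, attained by the column partition. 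The existence direction is then automatic from the same optimization, but I would also give the elementary construction via $\wedge^n T_{\P^n} = \O(n+1)$ and global generation of $T_{\P^n}(-1)$ to keep the argument self-contained.
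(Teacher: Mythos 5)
Your construction for the nonvanishing direction is the same as the paper's: write $p=qn+r$, inject $\det T_{\P^n}=\O_{\P^n}(n+1)$ into $T_{\P^n}^{\otimes n}$ by antisymmetrization, use a nonzero section of $T_{\P^n}(-1)$ for the remaining factor $T_{\P^n}^{\otimes r}$, and check the small arithmetic inequality (the paper verifies $l:=k-q(n+1)\leq r$, which is exactly your ``short arithmetic verification''). For the vanishing direction, however, you take a genuinely different route. The paper's proof is a two-line slope argument: $T_{\P^n}$ is stable and tensor products of semistable bundles are semistable (Huybrechts--Lehn), so $T_{\P^n}^{\otimes p}(-k)$ is semistable; a nonzero section gives an injection $\O_{\P^n}\hookrightarrow T_{\P^n}^{\otimes p}(-k)$, forcing $\mu\bigl(T_{\P^n}^{\otimes p}(-k)\bigr)=p\frac{n+1}{n}-k\geq 0$. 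Semistability is precisely the sharp tool missing from your rejected attempts (restriction to a line, Euler-sequence filtrations), and it makes the bound immediate. Your Borel--Weil--Bott plan does work, though you leave it at the level of an expectation: writing $T_{\P^n}=Q(1)$ with $Q$ the tautological quotient bundle, one has $T_{\P^n}^{\otimes p}(-k)=\bigoplus_{\lambda}\bigl(\Sigma^{\lambda}Q(p-k)\bigr)^{\oplus m_\lambda}$ over partitions $\lambda$ of $p$ with at most $n$ parts, and Bott gives $H^0\bigl(\Sigma^{\lambda}Q(p-k)\bigr)\neq 0$ iff $\lambda_n+p-k\geq 0$; since $\lambda_n\leq\lfloor p/n\rfloor$, with equality for the near-rectangular partition (not literally a single column unless $p\leq n$ --- your ``most efficient per unit weight'' intuition is right, but the extremizer for general $p$ is the partition with all parts $\lfloor p/n\rfloor$ or $\lceil p/n\rceil$), the maximal admissible $k$ is $p+\lfloor p/n\rfloor=\lfloor p(n+1)/n\rfloor$, equivalent to the stated bound for integral $k$. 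So your approach costs more machinery (Schur--Weyl decomposition plus Bott) but buys a complete description of $H^0$ as a $GL_{n+1}$-representation, whereas the paper's semistability argument is shorter, self-contained modulo the cited references, and generalizes more readily to the other situations in the paper where slope arguments are used.
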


\begin{proof} 
Supposons que $k \leq \frac{p(n+1)}{n}$ et consid\'erons la division euclidienne de $p$ par $n$ : $p=nq +r$, avec $q\geq 0$ et $0\leq r < n$. On a 
\[
H^0(\P^n,  T_{\P^n}^{\otimes p}(-k)) = H^0(\P^n,  [T_{\P^n}^{\otimes n}(-n-1)]^{\otimes q} \otimes  T_{\P^n}^{\otimes r}(-l))    \ \text{ avec } l=k-q(n+1).
\] 
Il existe un morphisme injectif 
\[
 \O_{\P^n}(n+1) \cong \det(T_{\P^n}) \hookrightarrow T_{\P^n}^{\otimes n} ,
\]
ce qui permet d'affirmer que  $H^0(\P^n,  [T_{\P^n}^{\otimes n}(-n-1)]^{\otimes q})\neq 0$.

De plus $l =k-q(n+1) \leq \frac{p(n+1)}{n} -q(n+1) = r + \frac{r}{n}$, avec $r<n$, donc $l\leq r$. Comme le fibr\'e $T_{\P^n}(-1)$ a des sections non nulles on a donc  
\[
H^0(\P^n,  T_{\P^n}^{\otimes r}(-l)) = H^0(\P^n,  [T_{\P^n}(-1)]^{\otimes r}(r-l)) \neq 0 ,
\] 
et on obtient avec ce qui pr\'ec\`ede que $H^0(\P^n,  T_{\P^n}^{\otimes p}(-k)) \neq 0$.

R\'eciproquement, supposons que $H^0(\P^n,  T_{\P^n}^{\otimes p}(-k)) \neq 0$. Le fibr\'e  $T_{\P^n}^{\otimes p}(-k)$ est semi-stable d'apr\`es \cite[Lemma 1.4.5]{Huy} et \cite[Theorem 3.1.4]{Huy} . Pour qu'il admette une section non nulle, il est donc n\'ecessaire qu'il ait une pente positive ou nulle. On rappelle que la pente d'un faisceau sans torsion $\F$, relativement \`a un fibr\'e en droites ample $L$, est d\'efinie par
\[
\mu(\F):= \frac{c_1(\F)\cdot c_1(L)^{n-1}}{\rg(\F)}.
\]
On a donc finalement
\[
\mu(T_{\P^n}^{\otimes p}(-k)) = p\left(\frac{n+1}{n}\right) -k \geq 0.
\]
\end{proof}

On termine cette section par un lemme technique sur les fibr\'es projectifs sur $\P^1$. Il sera utilis\'e \`a plusieurs reprises pour d\'emontrer les th\'eor\`emes \ref{theo:1} et \ref{theo:2}.

\begin{lem}\label{lem:3}
Soient $E$ un fibr\'e vectoriel ample de rang $r+1 \geq2$ sur $\P^1$, et $N$ un fibr\'e en droites nef sur $\P^1$. On note $\pi: X=\P(E) \rightarrow \P^1$ le fibr\'e projectif associ\'e \`a $E$. Soit $T:=\O_{\P^1}(\a_0) \oplus \cdots \oplus \O_{\P^1}(\a_m)$ avec des entiers $ 2 \geq \a_0 \geq \ldots \geq \a_m  $. \'Etant donn\'es deux entiers $p,q \in \N$, avec $p\geq 1$, on suppose qu'il existe un entier $i\in \{0,\ldots,p\}$ tel que 
\[
H^0(X, T_{X/\P^1} ^{\otimes i}\otimes \pi^\ast T^{\otimes p-i} \otimes \O_{\P(E)}(-p-q)\otimes \pi^\ast N^{-1}) \neq 0
\]
Alors $E = \O_{\P^1}(1) \oplus  \O_{\P^1}(1)$, $ p=2i$, $q=0$, et $N =\O_{\P^1}$. En particulier $X \cong \P^1 \times \P^1$.
\end{lem}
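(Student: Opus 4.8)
The strategy is to reduce everything to cohomology computations on $\P^1$ by pushing the bundle down along $\pi$. First I would note that $T_{X/\P^1}$ is a line bundle only when $r=1$; in general $\P(E)$ has relative dimension $r$, and the relative Euler sequence gives a description of $T_{X/\P^1}$. Since we are tensoring by powers of $\O_{\P(E)}(-1)$ with a large twist, the key fact to extract is a \emph{lower bound} for the twist needed: the tensor powers of $T_{X/\P^1}$ contribute, via the relative Euler sequence $0\to\O_X\to\pi^\ast E^\vee\otimes\O_{\P(E)}(1)\to T_{X/\P^1}\to 0$ (with the convention $\P(E)=\mathbf{Proj}(\Sym E)$), only nonnegative powers of $\O_{\P(E)}(1)$, so that $T_{X/\P^1}^{\otimes i}$ admits a filtration with graded pieces $\pi^\ast(\text{something})\otimes\O_{\P(E)}(j)$ with $0\le j\le i$. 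Tensoring by $\pi^\ast T^{\otimes p-i}\otimes\O_{\P(E)}(-p-q)\otimes\pi^\ast N^{-1}$, every graded piece is of the form $\pi^\ast M_j\otimes\O_{\P(E)}(j-p-q)$ with $0\le j\le i\le p$, hence the relative degree $j-p-q$ is $\le -q\le 0$, and it is $<0$ as soon as $q>0$ or $j<p$. A nonzero global section of the whole bundle forces a nonzero global section of one graded piece, hence $R^0\pi_\ast$ of that piece is nonzero; but $R^0\pi_\ast(\pi^\ast M_j\otimes\O_{\P(E)}(d))=M_j\otimes\Sym^d E$ which vanishes for $d<0$. Therefore we must have $j=p$ (so $i=p$ and the filtration piece is the ``top'' one, $\Sym^p$ of the relative tangent quotient) and $q=0$.

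Next, with $q=0$ and $i=p$, the surviving piece is $\pi^\ast N^{-1}\otimes\O_{\P(E)}(0)\otimes(\text{a piece of }\Sym^p(\pi^\ast E^\vee\otimes\O_{\P(E)}(1))/\text{lower})$; pushing forward, the condition becomes $H^0(\P^1, N^{-1}\otimes\Sym^p E\otimes(\wedge\text{-type correction}))\neq 0$, and more crudely it implies $H^0(\P^1,\Sym^p E\otimes N^{-1}\otimes(\det E)^{-a})\ne 0$ for appropriate bookkeeping — the cleanest route is to observe directly that the original bundle has a nonzero section iff $\pi_\ast$ of it does, and $\pi_\ast\big(T_{X/\P^1}^{\otimes p}\otimes\O_{\P(E)}(-p)\otimes\pi^\ast N^{-1}\big)$ is, up to the Euler-sequence filtration, built from $\Sym^a E^\vee\otimes(\det E?)\otimes N^{-1}$ with total ``$E$-degree'' exactly $0$, so the only contribution is a subsheaf of $N^{-1}$ twisted by something of degree $0$ along $E$; I would instead argue via slopes. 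Write $\deg E=e>0$ (ampleness forces $e\ge r+1=2$ if $r=1$, and each summand... but $E$ need not split — keep it as $e=\deg E$). The bundle $T_{X/\P^1}^{\otimes p}\otimes\O_{\P(E)}(-p)$ restricted to a fibre $\P^r$ is $T_{\P^r}^{\otimes p}(-p)$, which is nef; along the base, an easy Harder–Narasimhan / nefness argument using that $E$ ample implies $\O_{\P(E)}(1)$ ample shows the only way to kill positivity with $\pi^\ast N^{-1}$ ($N$ nef, so $N^{-1}$ antinef) is for everything to be as negative as possible, pinning $\deg N=0$ and, crucially, $\deg E$ minimal.

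To finish I would combine two constraints. On one hand, restricting to a line in a fibre and to a section of $\pi$ gives numerical inequalities on the $\O_{\P(E)}(1)$-degree and the base degree that force $e\le 2$ and $N=\O_{\P^1}$. On the other hand, Lemma 1.5 applied fibrewise: on $\P^r$ the piece $T_{\P^r}^{\otimes p}(-p-q')$ with the relevant twist has sections only if $p+q'\le p(r+1)/r$, i.e. $q'\le p/r$; pushing the computation through the Euler sequence this pins the remaining discrete data. Putting $r=1$: then $X=\P(E)$ with $E$ a rank-$2$ ample bundle on $\P^1$, $T_{X/\P^1}\cong\O_{\P(E)}(2)\otimes\pi^\ast\det E^{-1}$, so the hypothesis reads $H^0\big(\P(E),\O_{\P(E)}(2i-p-q)\otimes\pi^\ast(T^{\otimes p-i}\otimes\det E^{-i}\otimes N^{-1})\big)\ne 0$; pushforward gives $\Sym^{2i-p-q}E\otimes T^{\otimes p-i}\otimes\det E^{-i}\otimes N^{-1}$ with a nonzero section on $\P^1$, forcing $2i-p-q\ge 0$ hence (with the earlier bound $j=i\le p$, here $2i-p-q\le i$ i.e. $i\le p+q$, automatic) and a slope/degree count $(2i-p-q)\frac{e}{2}+(p-i)\cdot(\le 2)-ie-\deg N\ge 0$ which, since $e\ge 2$, collapses only when $e=2$ (so $E=\O(1)\oplus\O(1)$ by semistability of an ample rank-$2$ bundle of degree $2$ on $\P^1$ — its summands are $\O(a)\oplus\O(b)$, $a\ge b\ge 1$, $a+b=2$), $q=0$, $p=2i$, $\alpha_0=\cdots=2$ forced by equality, and $\deg N=0$ so $N=\O_{\P^1}$; then $X\cong\P^1\times\P^1$. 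The main obstacle I anticipate is the bookkeeping when $r\ge 2$ and $E$ does not split: one must rule out $r\ge 2$ entirely (or show the relative-degree argument above already forces $r=1$), and keeping the Euler-sequence filtration indices straight so that ``relative degree $<0\Rightarrow$ no sections'' is applied to the correct graded pieces — this is where a careless sign or an off-by-one would break the argument.
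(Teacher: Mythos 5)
Your reduction of the general case contains a genuine error. It rests on the claim that the relative Euler sequence yields a filtration of $T_{X/\P^1}^{\otimes i}$ with graded pieces of the form $\pi^\ast M_j\otimes\O_{\P(E)}(j)$, $0\le j\le i$. No such filtration exists: for $r\ge 2$ the restriction of $T_{X/\P^1}$ to a fibre is $T_{\P^r}$, which is stable of rank $r\ge 2$ and so admits no filtration with quotients of that shape; and for $r=1$ you yourself write $T_{X/\P^1}\cong\O_{\P(E)}(2)\otimes\pi^\ast\det E^{-1}$, so the relative degree of $T_{X/\P^1}^{\otimes i}$ is $2i$, not at most $i$. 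The consequences you draw from this claim ($i=p$, and $q=0$ by pushing forward graded pieces of negative relative degree) cannot be right, since they contradict the conclusion of the lemma itself, which forces $p=2i$ and hence $i<p$; they also distort your fibrewise use of Lemma~\ref{lem:2}, where the exponent must be $i$, giving $p+q\le i\,\frac{r+1}{r}$, not $p$. As a result the case $r\ge 2$, which you flag as the anticipated obstacle, is never actually disposed of. (Also, on $\P^1$ every vector bundle splits, so ``$E$ need not split'' is not an issue; the splitting $E=\O_{\P^1}(a_0)\oplus\cdots\oplus\O_{\P^1}(a_r)$ with $a_j\ge 1$ is used in an essential way.)

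The missing idea is the choice of test curves. Restricting the hypothesis to a general fibre and applying Lemma~\ref{lem:2} gives $i\ge\frac{r}{r+1}(p+q)$, hence $2i\ge p$, and it is this same inequality that forces $r=1$ at the very end once $p=2i$ and $q=0$ are known. The decisive second inequality comes from restricting to the section $\ell\subset\P(E)$ determined by the largest-degree quotient $E\to\O_{\P^1}(a_r)$: one computes $N_{\ell/X}\cong\O_{\P^1}(a_r-a_0)\oplus\cdots\oplus\O_{\P^1}(a_r-a_{r-1})$, notes that $\ell$ is free so the nonzero section survives restriction to $\ell$, and since $(T_{X/\P^1})|_\ell\cong N_{\ell/X}$, $\O_{\P(E)}(1)|_\ell\cong\O_{\P^1}(a_r)$ and $\a_0\le 2$, obtains $(a_r-a_0)i+2(p-i)-a_r(p+q)-n\ge 0$; combined with $2i\ge p$ this forces $a_0=\cdots=a_r=1$, then $p=2i$, $q=n=0$, then $r=1$. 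Your $r=1$ endgame (push forward $\O_{\P(E)}(2i-p-q)\otimes\pi^\ast(\cdots)$ and bound the largest summand of $\Sym^{2i-p-q}E\otimes T^{\otimes p-i}\otimes\det E^{-i}\otimes N^{-1}$) is essentially this same degree count and is fine, except that the ``slope $e/2$'' variant tacitly assumes $E$ semistable and must be replaced by the maximal summand; but it does not by itself eliminate $r\ge 2$, which is the heart of the lemma.
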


\begin{proof}
Notons d'abord que l'hypoth\`ese entra\^ine en se restreignant \`a une fibre g\'en\'erale $f$ de $\pi$ que
\[
H^0(f , T_{f}^{\otimes i} \otimes \O_{\P(E)}(-p-q)|_f) \neq 0. 
\]
Comme $f\cong\P^r$ et $\O_{\P(E)}(-p-q)|_f\cong \O_{\P^r}(-p-q)$, on en d\'eduit que $i\geq \frac{r}{r+1} (p+q)$ d'apr\`es le lemme \ref{lem:2}. En particulier on notera que $2i\geq p$.

Comme $E$ et $N$ sont respectivement ample et nef on peut \'ecrire $E = \O_{\P^1}(a_0)\oplus \cdots \oplus \O_{\P^1}(a_r)$ avec $1\leq a_0 \leq \cdots \leq a_r$, et $N = \O_{\P^1}(n)$ avec $n\geq 0$. 
Le morphisme surjectif de faisceaux $E \rightarrow \O_{\P^1}(a_r)$ donne lieu \`a une section $\sigma$ de la projection naturelle $\pi : X \rightarrow \P^1$ telle que $\sigma^\ast \O_{\P(E)}(1)\cong \O_{\P^1}(a_r)$. Notons $\ell$ l'image de $\P^1$ par cette section.
On a la suite exacte
\[
0 \rightarrow \O_X(1)\otimes \I_\ell \rightarrow \O_X(1) \rightarrow \O_X(1)|_\ell \rightarrow 0.
\]
En appliquant $\pi_\ast$ \`a cette suite exacte on obtient 
\[
0 \rightarrow \pi_\ast[\O_X(1)\otimes \I_\ell] \rightarrow E = \O_{\P^1}(a_0)\oplus \cdots \oplus \O_{\P^1}(a_r) \rightarrow \O_{\P^1}(a_r) \rightarrow 0,
\]
donc $\pi_\ast[\O_X(1)\otimes \I_\ell] \cong \O_{\P^1}(a_0)\oplus \cdots \oplus \O_{\P^1}(a_{r-1})$. 
Par ailleurs l'application naturelle 
\[
\pi^\ast \pi_\ast[\O_X(1)\otimes \I_\ell] \rightarrow \O_X(1)\otimes \I_\ell
\]
est surjective, et induit en restriction \`a la courbe $\ell$ un isomorphisme
\[
(\pi^\ast \pi_\ast[\O_X(1)\otimes \I_\ell])|_\ell \cong \O_{\P^1}(a_0)\oplus \cdots \oplus \O_{\P^1}(a_{r-1}) \stackrel{\sim}{\rightarrow} (\O_X(1) \otimes \I_\ell)|_\ell.
\]
On en d\'eduit que
\[
N_{\ell/X}\cong (\I_\ell / \I^2_\ell)^\ast \cong (\I_\ell|_\ell)^\ast \cong \O_X(1)|_\ell \otimes [\O_{\P^1}(a_0)\oplus \cdots \oplus \O_{\P^1}(a_{r-1})]^\ast ,
\]
d'o\`u  $N_{\ell/X}\cong \O_{\P^1}(a_r-a_0) \oplus \cdots \oplus  \O_{\P^1}(a_r-a_{r-1})$. 

On voit en particulier que la courbe rationnelle $\ell$ est libre: ses d\'eformations recouvrent la vari\'et\'e $X$. On a donc
\begin{equation}\label{equ:section}
H^0(\ell, (T_{X/\P^1}^{\otimes i} \otimes \pi^\ast T^{\otimes p-i}\otimes  \O_{\P(E)}(-p-q)\otimes \pi^\ast N^{-1})|_{\ell} ) \neq 0. 
\end{equation}
Comme $\ell\cong \P^1$ est une section de $\pi : X \rightarrow \P^1$, on a 
\[
(T_{X/\P^1})|_{\ell}\cong N_{\ell/X}\cong \O_{\P^1}(a_r-a_0) \oplus \cdots \oplus  \O_{\P^1}(a_r-a_{r-1}).
\]
Par ailleurs on a $\pi^\ast T|_{\ell}\cong \O_{\P^1}(\a_0) \oplus \cdots \oplus \O_{\P^1}(\a_m)$, $\O_{\P(E)}(1)|_{\ell} \cong \O_{\P^1}(a_r)$, et $\pi^\ast N|_{\ell}\cong \O_{\P^1}(n)$.
La non-annulation (\ref{equ:section}) entra\^ine donc que
\[
(a_r-a_0)i + \a_0(p-i) + a_r(-p-q) -n \geq 0,
\]
et comme $\a_0 \leq 2$ on a 
\begin{equation}\label{equ:ineg1}
(a_r-a_0)i + 2(p-i) + a_r(-p-q) -n \geq 0, 
\end{equation}
d'o\`u 
\[
(a_r-a_0)i + 2(p-i) - a_r p \geq 0.
\]
On en d\'eduit d'une part que $i<p$, car
\[
2(p-i) \geq   a_r p -(a_r-a_0)i=  a_r(p-i)+ a_0 i> 0,
\]
et d'autre part que $ (a_r-a_0)i -(a_r -1)p \geq 0$, car $2i \geq p$. 

Comme $a_0 \geq 1$ on a donc
\[
(a_r- 1) (i-p) \geq (a_r-a_0)i -(a_r -1)p \geq 0,
\]
d'o\`u $a_r- 1 \leq 0$ puisque $i-p <0$. On a donc 
\[
a_0= a_1=\cdots = a_r =1.
\]
En tenant compte de cette information, l'in\'egalit\'e (\ref{equ:ineg1}) donne alors 
\[
p=2i  \  \text{ et } \   q=n=0.
\]
Par ailleurs  comme $i \geq \frac{r}{r+1} (p+q)$, on doit avoir $r=1$. Finalement on a $N = \O_{\P^1}$ et  $E = \O_{\P^1}(1)\oplus \O_{\P^1}(1)$. 

\end{proof}

\section{Deux th\'eor\`emes d'annulation}

Dans cette section on montre deux th\'eor\`emes d'annulation: l'un concerne le fibr\'e tangent relatif des fibr\'es projectifs sur une courbe, et l'autre le fibr\'e tangent relatif des fibr\'es en quadriques sur une courbe. Par fibr\'e en quadriques on entend un morphisme plat et projectif entre deux vari\'et\'es quasi-projectives, et dont toutes les fibres sont isomorphes \`a des quadriques irr\'eductibles et r\'eduites (mais pas n\'ecessairement lisses).

\begin{theo}\label{theo:3}
Soit $C$ une courbe projective lisse et connexe, $E$ un fibr\'e vectoriel ample sur $C$ de rang $r+1\geq 2$. Soit $\pi: X=\P(E) \rightarrow C$ le fibr\'e projectif associ\'e \`a $E$  et $G$ un fibr\'e vectoriel nef sur $X$. Alors 
\[
 \forall p\geq 1 \ , \ H^0(X, T_{X/C} ^{\otimes p} \otimes \O_{\P(E)}(-p)\otimes G^\ast) = 0
\]
\end{theo}

\begin{proof}
D'apr\`es \cite{CF}, il existe une courbe projective lisse et connexe $\tilde{C}$, un morphisme fini $f: \tilde{C} \rightarrow C$, un fibr\'e en droites $\tilde{M}$ ample sur $\tilde{C}$ et un entier $s\geq 1$ tel que $f^\ast E$ soit un quotient de $\tilde{M}^{\oplus s}$. Quitte \`a faire le changement de base $\tilde{C} \rightarrow C$ on peut donc supposer qu'il existe un fibr\'e en droites $M$ ample sur $C$ tel que $E\otimes M^{-1}$ soit engendr\'e par ses sections globales.

 On en d\'eduit que le fibr\'e en droites $N:= \O_{\P(E)}(1) \otimes \pi^\ast M^{-1}$ est lui aussi engendr\'e par ses sections globales. Consid\'erons $Z \in |N|$ un \'el\'ement g\'en\'eral du syst\`eme lin\'eaire sans point base $|N|$. Il suffira pour obtenir le r\'esultat de montrer que
\[
H^0(Z, (T_{X/C}^{\otimes p} \otimes \O_{\P(E)}(-p)\otimes G^\ast)|_Z ) = 0.
\]

Proc\`edons par r\'ecurrence sur $r$. Dans le cas o\`u $r=1$, $Z$ est une section de $\pi: X=\P(E) \rightarrow C$, donc $(T_{X/C})|_Z \cong N_{Z/X}\cong N|_Z $. Si $\pi_Z$ d\'esigne la restriction de $\pi$ \`a $Z$ on a donc
\[
H^0(Z, (T_{X/C}^{\otimes p} \otimes \O_{\P(E)}(-p)\otimes G^\ast)|_Z ) = H^0(Z, (\pi_Z ^\ast M^{p} \otimes G|_Z)^\ast ) = 0,
\] 
car $\pi_Z ^\ast M^{p} \otimes G|_Z$ est ample.

Si $r\geq 2$, on a pour une fibre g\'en\'erale $f$ de $\pi$ :
\[
(Z\cap f,  \O_{\P(E)}(1)|_{Z\cap f}) \cong (\P^{r-1}, \O_{\P^{r-1}}(1)), 
\]
donc d'apr\`es \cite[Corollary 5.4]{Fuj} il existe un fibr\'e vectoriel $F$ de rang $r$ sur $C$ tel que 
\[
(Z,  \O_{\P(E)}(1)|_Z) \cong (\P(F), \O_{\P(F)}(1)) .
\]
De plus on a la suite exacte suivante
\[
0 \rightarrow T_{Z/C} \rightarrow (T_{X/C})|_Z \rightarrow N_{Z/X} \cong N|_Z \rightarrow 0.
\]
D'apr\`es le lemme \ref{lem:1} il suffira donc de montrer que pour tout $i \in \{0,\ldots,p\}$ on a
\[
H^0(Z, T_{Z/C}^{\otimes i}\otimes N|_Z^{\otimes p-i} \otimes \O_{\P(F)}(-p)\otimes G^\ast)|_Z ) =0.
\]
ou encore
\[
H^0(Z, T_{Z/C}^{\otimes i}\otimes \O_{\P(F)}(-i)\otimes (\pi_Z ^\ast M^{p-i} \otimes G|_Z)^\ast ) =0.
\]

Si $i\geq 1$, le r\'esultat est donn\'e par l'hypoth\`ese de r\'ecurrence, car $\pi_Z ^\ast M^{p-i} \otimes G|_Z$ est un fibr\'e vectoriel nef sur $Z$.

Si $i=0$, on a encore
\[
H^0(Z, (\pi_Z ^\ast M^{p} \otimes G|_Z)^\ast ) =0 ,
\]
car pour toute courbe irr\'eductible $C' \subset Z$ non contract\'ee par $\pi_Z$, la restriction de $\pi_Z ^\ast M^{p} \otimes G|_Z$  \`a $C'$ est ample.
\end{proof}

Introduisons quelques notations. Soit $X$ une vari\'et\'e normale et $\F$ un faisceau sans torsion sur $X$. Soient $X'\subset X$ le plus grand ouvert sur lequel $\F$ est localement libre, et $i:X'\hookrightarrow X$ l'inclusion. Pour tout entier $p\geq 1$ on notera $\F^{[\otimes p]}:=i_\ast[(\F|_{X'})^{\otimes p}]$. Pour un morphisme de vari\'et\'es $\pi :X \rightarrow Y$ on notera $T_{X/Y}:=( \Omega^1_{X/Y})^\ast$.

\begin{theo}\label{theo:4}
Soient $X$ une vari\'et\'e projective normale, $C$ une courbe projective lisse et connexe, et $\pi: X \rightarrow C$ un fibr\'e en quadriques de dimension $r\geq 1$. Soient $L$ un fibr\'e en droites ample sur $X$ tel que pour toute fibre $f\cong Q_r$ de $\pi$ on ait $L|_f\cong \O_{Q_r}(1)$ et $G$ un fibr\'e vectoriel nef sur $X$. On a alors
\[
\forall p\geq 1 \ , \ H^0(X, T_{X/C} ^{[\otimes p]} \otimes L^{-p}\otimes G^\ast) = 0
\]
\end{theo}

\begin{proof} La d\'emonstration est analogue \`a celle du th\'eor\`eme \ref{theo:3}.

On note $X'$ l'ouvert de $X$ o\`u le morphisme $\pi$ est lisse. On remarque pour commencer que comme les fibres de $\pi$ sont des quadriques irr\'eductibles et r\'eduites, le lieu singulier $X \setminus X'$ de $\pi$ est de codimension au moins $2$ dans $X$. De plus $(T_{X/C})|_{X'}= T_{X'/C}$ \'etant localement libre, on a  $(T_{X/C} ^{[\otimes p]})|_{X'}= T_{X'/C}^{\otimes p}$, donc il suffira de montrer que
\[
H^0(X', T_{X'/C}^{\otimes p} \otimes L|_{X'}^{-p}\otimes G|_{X'}^\ast) = 0.
\]

Comme $h^0(f,L|_f)=h^0(Q_r,\O_{Q_r}(1)) = r+2$ pour toute fibre $f$ de $\pi$, le faisceau $E:= \pi_\ast L$ est localement libre de rang $r+2$. De plus il existe un plongement de $X$ dans $\P(E)$ au dessus de $C$ : 
\[
\xymatrix{X \ar[rd]^\pi \ar[rr]^\iota & & \P(E)\ar[ld] \\
& C & }
\]
On identifie dans la suite $X$ avec son image $\iota(X)$. On a alors $\O_{\P(E)}(1) |_X \cong L$, et  pour une fibre $f'\cong \P^{r+1}$ de la projection $\P(E) \rightarrow C$ on a $X \cap f' \cong Q_r$.  

Soit $f: \tilde{C} \rightarrow C$ un morphisme fini, avec $\tilde{C}$ une courbe projective lisse et connexe. On consid\`ere le changement de base
\[
\xymatrix{\P(f^\ast E) \ar[d] \ar[r]^g & \P(E) \ar[d]\\
\tilde{C} \ar[r]^f & C}
\]
On note $\tilde{X}:= g^{-1}(X)$, et $\tilde{\pi}:\tilde{X}\rightarrow \tilde{C}$ la restriction de la projection $\P(f^\ast E) \rightarrow \tilde{C}$.  Alors $\tilde{\pi}$ est un fibr\'e en quadriques, et la vari\'et\'e $\tilde{X}$ est normale d'apr\`es \cite[Proposition II.8.23]{Har}: en effet, c'est une hypersurface dans la vari\'et\'e lisse $\P(f^\ast E)$, et elle est lisse en codimension $1$.  
 
Gr\^ace \`a \cite{CF}, on peut donc supposer, quitte \`a faire un changement de base, qu'il existe un fibr\'e en droites $M$ ample sur $C$ tel que $E\otimes M^{-1}$ soit engendr\'e par ses sections globales. Le fibr\'e en droites $N:= L \otimes \pi^\ast M^{-1}$ est alors engendr\'e par ses sections globales, et on  consid\`ere $Z \in |N|$ un \'el\'ement g\'en\'eral. On pose $Z':= Z \cap X'$. Il suffira pour d\'emontrer le r\'esultat de voir que
\[
H^0(Z', (T_{X'/C}^{\otimes p} \otimes L|_{X'}^{-p}\otimes G|_{X'}^\ast)|_{Z'} ) = 0.
\]

On proc\`ede par r\'ecurrence sur $r$. Dans le cas o\`u $r=1$, les fibres de $\pi$ sont des coniques irr\'eductibles et r\'eduites, donc sont lisses. On a donc $X=X'$, et $Z=Z'$. De plus $(T_{X/C})|_Z \cong N_{Z/X}\cong N|_Z $, donc si $\pi_Z$ d\'esigne la restriction de $\pi$ \`a $Z$ on a 
\[
H^0(Z, (T_{X/C}^{\otimes p} \otimes \O_{\P(E)}(-p)\otimes G^\ast)|_Z ) = H^0(Z, (\pi_Z ^\ast M^{p} \otimes G|_Z)^\ast ) = 0,
\] 
car $\pi_Z ^\ast M^{p} \otimes G|_Z$ est ample.

Si $r\geq 2$, on a la suite exacte suivante
\[
0 \rightarrow T_{Z'/C} \rightarrow (T_{X'/C})|_{Z'} \rightarrow N_{Z'/X'} \cong N|_{Z'} \rightarrow 0.
\]
D'apr\`es le lemme \ref{lem:1} il suffira donc de montrer que pour tout $i \in \{0,\ldots,p\}$ on a
\[
H^0(Z', T_{Z'/C}^{\otimes i}\otimes N|_{Z'}^{\otimes p-i} \otimes L^{-p}\otimes G^\ast)|_{Z'} ) =0.
\]
ou encore
\[
H^0(Z', T_{Z'/C}^{\otimes i}\otimes L^{-i}\otimes (\pi_{Z} ^\ast M^{p-i} \otimes G|_{Z})|_{Z'}^\ast ) =0.
\]

Or on a pour une fibre $F$ de $\pi$ :
\[
(Z\cap F,  L|_{Z\cap F}) \cong (Q_{r-1}, \O_{Q_{r-1}}(1)), 
\]
donc le morphisme $\pi_Z:Z \rightarrow C$ satisfait les hypoth\`eses du lemme. Le r\'esultat est donc donn\'e par l'hypoth\`ese de r\'ecurrence si $i\geq 1$, car $\pi_Z ^\ast M^{p-i} \otimes G|_Z$ est un fibr\'e vectoriel nef sur $Z$.

Si $i=0$, on a 
\[
H^0(Z', (\pi_{Z}^\ast M^{p} \otimes G|_Z)|_{Z'}^\ast ) = H^0(Z,(\pi_{Z}^\ast M^{p} \otimes G|_Z)^\ast )
\]
car $\codim(Z\setminus Z') \geq 2$. De plus pour toute courbe irr\'eductible $C' \subset Z$ non contract\'ee par $\pi_Z$, la restriction de $\pi_Z ^\ast M^{p} \otimes G|_Z$  \`a $C'$ est ample, donc le terme de droite de l'\'egalit\'e ci-dessus est nul.
\end{proof}

\section{D\'emonstration des r\'esultats}
Dans cette section on donne la d\'emonstration des th\'eor\`emes \ref{theo:1} et \ref{theo:2}. L'un des outils majeurs dans cette d\'emonstration est une g\'en\'eralisation du th\'eor\`eme de Miyaoka (\cite[Corollary 8.6]{Miy}), qui fait l'objet de la prochaine sous-section. 

\subsection{Une g\'en\'eralisation du th\'eor\`eme de Miyaoka}
Rappelons d'abord une d\'efinition. Soit $H$ un diviseur ample sur une vari\'et\'e projective normale $X$ de dimension $n\geq 1$. On appelle courbe g\'en\'erale au sens de Mehta-Ramanathan, ou encore courbe MR-g\'en\'erale (relativement \`a $H$), toute courbe de la forme $C= H_1 \cap \cdots \cap H_{n-1}$, o\`u les $H_i \in |m_i H|$ sont g\'en\'eraux, et les $m_i$ sont des entiers arbitrairement grands.
On remarque que si $X_0 \subset X$ est un ouvert dont le compl\'ementaire est de codimension au moins $2$, alors une courbe MR-g\'en\'erale est contenue dans $X_0$. 

\begin{prop}\label{prop:1}
Soient $X$ une vari\'et\'e projective lisse de dimension $n\geq 1$, $X_0\subset X$ un ouvert dont le compl\'ementaire est de codimension au moins $2$, $Y_0$ une vari\'et\'e lisse de dimension $m \geq 1$, et $f:X_0 \rightarrow Y_0$ un morphisme \'equidimensionnel propre et dominant. On suppose que pour toute courbe MR-g\'en\'erale $C \subset X_0$ (relativement \`a un diviseur ample donn\'e $H$ sur $X$), le fibr\'e vectoriel $f^\ast\Omega^1_{Y_0}|_C $ n'est pas nef. 
Alors toute compactification projective de $Y_0$ est unir\'egl\'ee.
\end{prop}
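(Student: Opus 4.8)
Pour d\'emontrer cette proposition on raisonnera par l'absurde, en contredisant le th\'eor\`eme de semi-positivit\'e g\'en\'erique de Miyaoka. L'unir\'eglage \'etant un invariant birationnel parmi les vari\'et\'es projectives, il suffit de montrer qu'une compactification projective lisse $\bar{Y}$ de $Y_0$ est unir\'egl\'ee ; on supposera donc le contraire. D'apr\`es le th\'eor\`eme de Miyaoka (\cite[Corollary 8.6]{Miy}), une vari\'et\'e projective lisse non unir\'egl\'ee a son fibr\'e cotangent g\'en\'eriquement nef ; sous la forme dont on se servira, cela signifie que pour le membre g\'en\'eral $Z$ de toute famille couvrante de courbes de $\bar{Y}$, le fibr\'e $\Omega^1_{\bar{Y}}|_{\tilde{Z}}$ est nef, $\tilde{Z}$ d\'esignant la normalis\'ee de $Z$.

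La famille couvrante \`a laquelle on appliquera ceci est celle form\'ee des courbes $f(C)\subset Y_0$, o\`u $C\subset X_0$ parcourt les courbes MR-g\'en\'erales de $X$ relativement \`a $H$. On \'etablira les points suivants. Une telle courbe $C$ est contenue dans $X_0$ (remarque pr\'ec\'edant l'\'enonc\'e), donc $f(C)\subset Y_0\subset\bar{Y}$ ; et $f(C)$ est irr\'eductible, comme image de $C$. Comme $f$ est propre et dominante, elle est surjective, donc les courbes $f(C)$ recouvrent $Y_0$ et forment ainsi une famille couvrante de $\bar{Y}$ ; cette famille \'etant domin\'ee par l'espace des param\`etres des courbes MR-g\'en\'erales de $X$ (ouvert dense d'un produit de syst\`emes lin\'eaires amples), $f(C)$ en est un membre g\'en\'eral d\`es que $C$ est MR-g\'en\'erale. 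Enfin, pour $C$ g\'en\'erale, $f|_C$ n'est pas constante (une courbe MR-g\'en\'erale n'est pas contenue dans une fibre de $f$, de dimension $n-m\leq n-1$) ; comme $C$ est lisse, $f|_C$ se factorise donc par la normalis\'ee en un morphisme fini et surjectif $\psi: C\rightarrow \widetilde{f(C)}$.

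Ceci fait, la contradiction est imm\'ediate. Soit $C\subset X_0$ une courbe MR-g\'en\'erale : par hypoth\`ese, $f^\ast\Omega^1_{Y_0}|_C$ n'est pas nef. Or $f(C)\subset Y_0$ et $\Omega^1_{Y_0}=\Omega^1_{\bar{Y}}|_{Y_0}$, donc $f^\ast\Omega^1_{Y_0}|_C\cong\psi^\ast\big(\Omega^1_{\bar{Y}}|_{\widetilde{f(C)}}\big)$ ; comme $\psi$ est fini et surjectif et que l'image r\'eciproque d'un fibr\'e nef par un morphisme reste nef, on en d\'eduit que $\Omega^1_{\bar{Y}}|_{\widetilde{f(C)}}$ n'est pas nef. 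Mais $\widetilde{f(C)}$ est la normalis\'ee d'un membre g\'en\'eral d'une famille couvrante de $\bar{Y}$ : ceci contredit l'\'enonc\'e du th\'eor\`eme de Miyaoka rappel\'e au premier paragraphe, et montre donc que $\bar{Y}$ est unir\'egl\'ee.

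Le point d\'elicat est la forme sous laquelle on invoque le th\'eor\`eme de Miyaoka : la semi-positivit\'e g\'en\'erique de $\Omega^1_{\bar{Y}}$ est classiquement \'etablie le long des courbes MR-g\'en\'erales de $\bar{Y}$ (intersections compl\`etes de diviseurs amples de grand degr\'e), alors qu'on en a besoin le long du membre g\'en\'eral d'une famille couvrante arbitraire. Le passage de l'une \`a l'autre rel\`eve des techniques habituelles : si $\Omega^1_{\bar{Y}}|_{Z_t}$ n'\'etait pas nef pour le membre g\'en\'eral $Z_t$ d'une telle famille, la filtration de Harder--Narasimhan relative de l'image r\'eciproque de $T_{\bar{Y}}$ au-dessus de l'espace des param\`etres, apr\`es saturation et descente par l'application trace, fournirait un sous-faisceau $\mathcal{F}\subset T_{\bar{Y}}$ (que l'on peut prendre feuillet\'e) avec $c_1(\mathcal{F})\cdot[Z_t]>0$ ; la classe $[Z_t]$ \'etant mobile, ceci contredirait la pseudo-effectivit\'e de $K_{\bar{Y}}$ (th\'eor\`eme de Boucksom--Demailly--P\u{a}un--Peternell), ou bien, via l'int\'egrabilit\'e alg\'ebrique des feuilletages \`a pente positive, donnerait directement l'unir\'eglage de $\bar{Y}$. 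Les v\'erifications du deuxi\`eme paragraphe (surjectivit\'e de $f$, recouvrement de $X$ par les courbes MR-g\'en\'erales, non-contraction d'une courbe g\'en\'erale) sont \'el\'ementaires et n'utilisent que la propret\'e et l'\'equidimensionnalit\'e de $f$.
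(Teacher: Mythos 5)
Votre argument repose, d\`es le premier paragraphe, sur un \'enonc\'e nettement plus fort que le th\'eor\`eme de Miyaoka : vous supposez que, pour $\bar{Y}$ non unir\'egl\'ee, la restriction de $\Omega^1_{\bar{Y}}$ \`a la normalis\'ee du membre g\'en\'eral de \emph{n'importe quelle} famille couvrante de courbes est nef. Or \cite[Corollary 8.6]{Miy}, joint au th\'eor\`eme de restriction de Mehta--Ramanathan \cite[Theorem 6.1]{MR}, ne fournit cette semi-positivit\'e que le long des courbes MR-g\'en\'erales de $\bar{Y}$ elle-m\^eme, c'est-\`a-dire des intersections compl\`etes g\'en\'erales de grands multiples d'un diviseur ample de $\bar{Y}$. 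Les courbes $f(C)$ que vous utilisez sont les images par $f$ de courbes intersections compl\`etes de $X$, et rien ne garantit qu'elles soient de ce type dans $\bar{Y}$ : c'est pr\'ecis\'ement l\`a toute la difficult\'e de la proposition, que votre r\'eduction contourne sans la traiter.

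Vous signalez ce point dans votre dernier paragraphe, mais le rem\`ede esquiss\'e ne comble pas le trou. D'une part, l'argument par pseudo-effectivit\'e est un non sequitur : un sous-faisceau propre $\F\subset T_{\bar{Y}}$ v\'erifiant $c_1(\F)\cdot[Z_t]>0$ pour une classe mobile ne contredit nullement la pseudo-effectivit\'e de $K_{\bar{Y}}$ (seule la positivit\'e de $c_1(T_{\bar{Y}})$ tout entier le ferait). D'autre part, la descente du premier terme de la filtration de Harder--Narasimhan relative en un sous-faisceau de $T_{\bar{Y}}$ n'est pas justifi\'ee : il n'existe pas de th\'eor\`eme de restriction \`a la Mehta--Ramanathan pour une famille couvrante arbitraire, et le sous-faisceau d\'estabilisant fibre \`a fibre n'a aucune raison de provenir d'un sous-faisceau global de $T_{\bar{Y}}$ ; la stabilit\'e par crochet (\emph{que l'on peut prendre feuillet\'e}) est de plus affirm\'ee sans preuve. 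Ces \'etapes manquantes sont exactement le contenu de la d\'emonstration du texte, qui \'evite le probl\`eme en restant sur $X$ : on coupe $X_0$ par $n-m$ hypersurfaces g\'en\'erales pour obtenir $Z_0$ fini au-dessus de $Y_0$, de sorte que la courbe $C$ soit MR-g\'en\'erale dans $Z$ et que Mehta--Ramanathan s'applique au faisceau r\'eflexif $i_\ast g^\ast T_{Y_0}$ ; on descend ensuite le sous-faisceau d\'estabilisant \`a $Y_0$ par passage \`a une cl\^oture galoisienne et unicit\'e de la filtration de Harder--Narasimhan ; on montre que le faisceau obtenu est un feuilletage via le tenseur d'O'Neill et l'amplitude de $\E\otimes\E\otimes(\F/\E)^\ast$ le long de $C$ ; et on conclut par \cite[Theorem 1]{KSCT}. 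Sans un substitut pr\'ecis de ces arguments (par exemple le crit\`ere d'int\'egrabilit\'e alg\'ebrique de Campana--P\u{a}un pour les feuilletages de pente minimale positive relativement \`a une classe mobile, dont la mise en oeuvre buterait de toute fa\c{c}on sur la m\^eme absence de descente), votre preuve comporte un trou essentiel.
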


\begin{proof}
Soient $m_1, \ldots m_{n-1}$ des entiers $\geq 1$, et pour tout $i \in \{1, \ldots , n-1\}$, soit $H_i \in |m_i H|$. On pose $Z= H_1 \cap \cdots \cap H_{n-m}$, $Z_0= X_0 \cap Z$, et on note $i:Z_0 \hookrightarrow Z$ l'inclusion, et $g:Z_0 \rightarrow Y_0$ la restriction de $f$ \`a $Z_0$.

Pour $m_i$ assez grand, et $H_i \in |m_iH|$ assez g\'en\'eraux, la vari\'et\'e $Z$ est lisse, le morphisme $g:Z_0 \rightarrow Y_0$ est fini, et $C:=H_1 \cap \cdots \cap H_{n-1} \subset Z_0$  est une courbe MR-g\'en\'erale. Par hypoth\`ese $g^\ast \Omega^1_{Y_0}|_C$ n'est pas nef, il existe donc un sous-faisceau de $g^\ast T_{Y_0}|_C$ de degr\'e strictement positif. On note $\F:=i_\ast g^\ast T_{Y_0}$. Ce faisceau est r\'eflexif d'apr\`es \cite[Corollary 7.1]{Har2}, et sa restriction \`a $C$ contient un sous-faisceau de pente strictement positive. On en d\'eduit que le premier terme $\E:=HN_1(\F)$ de la filtration de Harder-Narasimhan de $\F$ est de pente strictement positive. On rappelle en effet que puisque $C$ est une courbe MR-g\'en\'erale, on a $HN_1(\F|_C) = HN_1(\F)|_C$ gr\^ace au th\'eor\`eme de Mehta-Ramanathan (\cite[Theorem 6.1]{MR}).
Ceci entra\^ine gr\^ace aux propositions \cite[Proposition 29]{KSCT} et \cite[Proposition 30]{KSCT} que le faisceau $\E$, ainsi que le faisceau $\E \otimes \E \otimes ( \F/\E)^\ast$, sont amples en restriction \`a $C$. 

Soient $K$ une cl\^oture galoisienne du corps de fonctions $K(Z_0)$ sur $K(Y_0)$, $h:T\rightarrow Z$ la normalisation de $Z$ dans $K$, et $T_0:= h^{-1}(Z_0)$. Notons que comme $h$ est fini, on a $\codim(T\setminus T_0) \geq 2$, et le diviseur $h^\ast H$ est ample. Soit $\F' := (h^\ast\F)^{\ast \ast}$. Ce faisceau r\'eflexif contient $(h^\ast\E)^{\ast \ast}$ qui est de pente strictement positive (relativement \`a $h^\ast H$), donc $HN_1(\F')$ est aussi de pente strictement positive. Quitte \`a remplacer $Z$ et $Z_0$ par $T$ et $T_0$, $g$ par $h|_{T_0}$, $\F$ et $H$ par $\F'$ et $h^\ast H$, on peut donc supposer que $K(Z_0) \supset K( Y_0)$ est une extension galoisienne de groupe de Galois $G$. 

Par unicit\'e de la filtration de Harder-Narasimhan, le faisceau $\E$ est alors invariant sous l'action de $G$, donc quitte \`a remplacer $Z_0$ par un autre ouvert dont le compl\'ementaire est de codimension au moins $2$ dans $Z$, on peut supposer qu'il existe un sous-faisceau $\G$ de $T_{Y_0}$ tel que $\E=i_\ast g^\ast\G$. 

Montrons maintenant que $\G$ est un feuilletage, c'est \`a dire qu'il est stable par crochet de Lie. Il suffit pour cela de montrer que le tenseur d'O'Neill associ\'e 
\[
\G\otimes \G \rightarrow T_{Y_0}/ \G
\] 
est identiquement nul. Or la restriction \`a $C$ du faisceau $\E \otimes \E \otimes ( \F/\E)^\ast$ est ample, donc la restriction \`a $g(C)$ du faisceau 
\[
\H om(\G\otimes \G , T_{Y_0}/ \G) \cong (\G\otimes \G)^\ast \otimes T_{Y_0}/ \G
\]  
est anti-ample. De plus les d\'eformations de $g(C)$ recouvrent la vari\'et\'e $Y_0$, par cons\'equent on a $\Hom(\G\otimes \G , T_{Y_0}/ \G) = 0$. On en conclut que $\G$ est un feuilletage.
 
On a finalement un feuilletage $\G\subset T_{Y_0}$ qui est ample en restriction \`a $g(C)$ : on peut \'etendre ce feuilletage \`a une compactification projective $Y$ de $Y_0$, et le th\'eor\`eme \cite[Theorem 1]{KSCT} entra\^ine alors que par un point g\'en\'eral de $g(C)$ il passe une courbe rationnelle. Ceci d\'emontre que $Y$ est unir\'egl\'ee. 

\end{proof}

\subsection{D\'emonstration du th\'eor\`eme \ref{theo:2}}

\begin{proof}[D\'emonstration du th\'eor\`eme \ref{theo:2}]
On proc\`ede par r\'ecurrence sur $n$. Si $n=1$, on voit facilement que $X \cong \P^1$ et $L\cong \O_{\P^1}(1)$. 

Si $n\geq 2$, $X$ est unir\'egl\'ee d'apr\`es \cite[Corollary 8.6]{Miy}. Soit $H\subset \RatCurves^n(X)$ une famille couvrante minimale de courbes rationnelles sur $X$. Soit $[g] \in H$ un \'el\'ement g\'en\'eral, on peut alors \'ecrire $g^\ast T_X \cong \O_{\P^1}(2) \oplus  \O_{\P^1}(1)^{\oplus d} \oplus  \O_{\P^1}^{\oplus d'}$ avec $d,d' \geq 0$. Par ailleurs  
$L$ \'etant ample on a $g^\ast L \cong \O_{\P^1}(l)$ avec $l\geq 1$. Comme $H^0(X, T_X ^{\otimes p} \otimes L^{-k}) \neq 0$ on a $2p-lk \geq 0 $, on en d\'eduit que $l=1$ car $k>p$ par hypoth\`ese. Par cons\'equent $g^\ast L \cong \O_{\P^1}(1)$, et $H$ est compl\`ete. 

La premi\`ere \'etape consiste \`a montrer que $X$ a un nombre de Picard $\rho(X)$ \'egal \`a $1$. Supposons par l'absurde que $\rho(X)\geq 2$. On consid\`ere $\pi_0:X_0 \rightarrow Y_0$ le quotient $H$-rc de $X$, o\`u $X_0$ est un ouvert dense de $X$. On a alors $\dim Y_0 \geq 1$ d'apr\`es \cite[IV.3.13.3]{Kol}. Quitte \`a remplacer $Y_0$ par un ouvert plus petit, on peut supposer que $Y_0$ et $\pi_0$ sont lisses.

Le corollaire \ref{cor:1} entra\^ine l'existence d'un entier $i\in\{0,\ldots,p\}$ tel que 
\begin{equation}\label{equ:non_annulation}
H^0(X_0, T_{X_0/Y_0}^{\otimes i} \otimes  (\pi_0^\ast T_{Y_0})^{\otimes p-i}\otimes L|_{X_0}^{-k}) \neq 0.
\end{equation}
En restriction \`a une fibre $f$ de $\pi_0$ on obtient donc:
\[
H^0(f, T_{f}^{\otimes i} \otimes L|_f^{-k}) \neq 0 
\]
On remarque d'abord que ceci implique que $i\geq 1$. De plus, comme $\dim Y_0 \geq 1$ on a $d:=\dim f < n$, donc l'hypoth\`ese de r\'ecurrence entra\^ine que $f\cong \P^d$ et $L|_f \cong \O_{\P^d}(1)$. Si $E=\pi_{0\ast}(L|_{X_0})$ on a $X_0 \cong \P(E)$ et $L|_{X_0}\cong \O_{\P(E)}(1)$. De plus d'apr\`es \cite[Theorem 2.6]{ADK} on peut supposer que $\codim(X\setminus X_0) \geq 2$ puisque la famille $H$ est compl\`ete. 

Soit $Y$ une compactification projective de $Y_0$. Montrons par l'absurde que $Y$ est unir\'egl\'ee. Si ce n'est pas le cas, alors la proposition \ref{prop:1} entra\^ine que pour une courbe MR-g\'en\'erale $C \subset X_0$, le fibr\'e vectoriel $\pi_0^\ast \Omega^1_{Y_0} |_C$ est nef. Soient $\phi: C \rightarrow Y_0$ la restriction de $\pi_0$ \`a $C$, et $E_C:=\phi^\ast E$. On consid\`ere le changement de base
\[
\xymatrix{\P(E_C) \ar[d]^{\pi_C} \ar[r]^\psi & X_0 \ar[d]^{\pi_0}\\
C \ar[r]^\phi & Y_0}
\]
Le fibr\'e vectoriel sur $\P(E_C)$ 
\[
G:= (\psi^\ast \pi_0^\ast \Omega^1_{Y_0})^{\otimes p-i} \otimes \psi^\ast (L|_{X_0})^{k-i} 
\] 
est nef (en fait il est m\^eme ample) car $\psi^\ast \pi_0^\ast \Omega^1_{Y_0}$ est nef et $\psi^\ast (L|_{X_0})$ est ample. Par ailleurs gr\^ace \`a la non-annulation (\ref{equ:non_annulation}) on a
 \[
H^0(\P(E_C), \psi^\ast (T_{X_0/Y_0}^{\otimes i} \otimes  (\pi_0^\ast T_{Y_0})^{\otimes p-i}\otimes L|_{X_0}^{-k}) )\neq 0.
\]
d'o\`u 
\[
H^0(\P(E_C), T_{\P(E_C)/C}^{\otimes i} \otimes \O_{\P(E_C)}(-i)  \otimes G^\ast) \neq 0.
\]
On obtient donc une contradiction avec le th\'eor\`eme \ref{theo:3}.

La vari\'et\'e $Y$ est donc unir\'egl\'ee. Soit $H'\subset \RatCurves^n(Y)$ une famille couvrante minimale de courbes rationnelles sur $Y$. Comme $\pi_0$ est une fibration en $\P^d$, on peut trouver sur $X$ une famille couvrante de courbes rationnelles $H''\subset \RatCurves^n(X)$ telle que pour une courbe g\'en\'erale $\ell \in H''$ la courbe $\overline{ \pi_0(\ell \cap X_0)}$ soit un \'el\'ement de la famille $H'$. De plus comme $\codim(X\setminus X_0) \geq 2$ on a $\ell \subset X_0$ pour $\ell \in H''$ g\'en\'erale, donc une courbe g\'en\'erale de la famille $H'$ est enti\`erement contenue dans $Y_0$.

Soit $[h] \in H'$ un \'el\'ement g\'en\'eral, on peut alors \'ecrire $h^\ast T_{Y_0} \cong \O_{\P^1}(2) \oplus  \O_{\P^1}(1)^{\oplus e} \oplus  \O_{\P^1}^{\oplus e'}$ avec $e,e' \geq 0$. 
Notons $Z:= \P(h^\ast E)$, on consid\`ere le changement de base suivant.
\[
\xymatrix{Z \ar[d]^\rho \ar[r] & X_0 \ar[d]^{\pi_0}\\
\P^1 \ar[r]^h & Y_0}
\]
Comme $[h] \in H'$ est un \'el\'ement g\'en\'eral, on d\'eduit de (\ref{equ:non_annulation}) que 
\[ 
H^0(Z , T_{Z/\P^1}^{\otimes i} \otimes \rho^\ast(h^\ast T_{Y_0})^{\otimes p-i}\otimes  \O_{\P(h^\ast E)}(-k) ) \neq 0.
\]
Enfin en appliquant le lemme \ref{lem:3} on obtient $k=p$, ce qui contredit nos hypoth\`eses.

On a donc $\rho(X)=1$. D'apr\`es \cite[Lemma 6.2]{ADK}, $T_X$ contient un sous-faisceau sans torsion $\E$ tel que $\mu(\E) \geq \frac{\mu(L^k)}{p}$. Pour un \'el\'ement g\'en\'eral $[g]\in H$ on a donc 
\[
\frac{\deg (g^\ast \E)}{\rg(\E)} \geq  \frac{\deg(L^k)}{p}= \frac{k}{p} > 1
\]
Comme $g^\ast \E$ est un sous-faisceau de $g^\ast T_X \cong \O_{\P^1}(2) \oplus  \O_{\P^1}(1)^{\oplus d} \oplus  \O_{\P^1}^{\oplus d'}$ on en d\'eduit que $g^\ast \E$ est ample. 
D'apr\`es \cite[Proposition 2.7]{ADK}, le quotient $H$-rc $\pi_0 : X_0 \rightarrow Y_0$ de $X$ est donc un fibr\'e en $\P^n$ pour un entier $n\geq 1$, d'o\`u $X\cong \P^n$  puisque $\rho(X)=1$. Enfin on en d\'eduit facilement que $L\cong\O_{\P^n}(1)$.
\end{proof}

\subsection{D\'emonstration du th\'eor\`eme \ref{theo:1}}
La premi\`ere \'etape consiste \`a montrer que sous les hypoth\`eses du th\'eor\`eme \ref{theo:1}, une famille couvrante minimale de courbes rationnelles sur $X$ est compl\`ete.

\begin{prop}\label{prop:2}
Soient $X$ une vari\'et\'e projective lisse de dimension $\geq 2$, $L$ un fibr\'e en droites ample sur $X$. On suppose qu'il existe un entier $p\geq 1$ tel que $H^0(X, T_X^{\otimes p} \otimes L^{-p}) \neq 0$.
Si $H\subset \RatCurves^n(X)$ est une famille couvrante minimale de courbes rationnelles sur $X$, alors $L$ est de degr\'e $1$ sur les courbes de $H$, et en particulier $H$ est compl\`ete.
\end{prop}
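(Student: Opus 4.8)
The plan is to combine the restriction of the assumed section to curves of $H$ with the description of minimal covering families, and to excise the ``degree $2$'' case by the same machinery used for Theorem~\ref{theo:2}.

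First I would recall, by Miyaoka's theorem, that $X$ is uniruled, fix a minimal covering family $H\subset\RatCurves^n(X)$, and write for a general $[g]\in H$ the splitting $g^\ast T_X\cong\O_{\P^1}(2)\oplus\O_{\P^1}(1)^{\oplus d}\oplus\O_{\P^1}^{\oplus d'}$ (with $d+d'=n-1$) and $g^\ast L\cong\O_{\P^1}(l)$, where $l\geq 1$ since $L$ is ample. A nonzero $s\in H^0(X,T_X^{\otimes p}\otimes L^{-p})$ has proper vanishing locus, so for a general member the curve is not contained in it and $g^\ast s$ is a nonzero section of $(g^\ast T_X)^{\otimes p}\otimes\O_{\P^1}(-lp)$. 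The line bundle summand of largest degree in $(g^\ast T_X)^{\otimes p}$ has degree $2p$, so $2p-lp\geq 0$, i.e. $l\in\{1,2\}$; the whole content of the statement is then to rule out $l=2$.

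So suppose $l=2$. Then the only summand of $(g^\ast T_X)^{\otimes p}$ of degree $\geq 2p$ is $(\Im\,(dg))^{\otimes p}\cong\O_{\P^1}(2p)$, hence $g^\ast s$ takes values there, is nowhere zero, and is at each point a nonzero multiple of $\tau^{\otimes p}$ with $\tau$ a tangent vector to $g_\ast\P^1$. Consequently, at a general point $x\in X$ the value $s_x\in T_{X,x}^{\otimes p}\otimes L_x^{-p}$ is a nonzero symmetric rank-one tensor which, up to scalar, equals the $p$-th power of a tangent vector to \emph{every} general $H$-curve through $x$; since distinct lines in $T_{X,x}$ give linearly independent $p$-th powers, all general $H$-curves through $x$ share one tangent line. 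By the generic finiteness of the tangent map of a minimal family (Kebekus, see \cite{Kol}) this forces $\dim H_x=0$, i.e. $d=0$ and $g^\ast T_X\cong\O_{\P^1}(2)\oplus\O_{\P^1}^{\oplus n-1}$. Hence the $H$-rc quotient $\pi_0:X_0\rightarrow Y_0$ has one-dimensional fibres $\cong\P^1$, so $\dim Y_0=n-1\geq 1$ (since $\dim X\geq 2$), and after shrinking, with $\codim(X\setminus X_0)\geq 2$ as in \cite{ADK}, one has $X_0\cong\P(E)$ for a rank-$2$ bundle $E$ on $Y_0$, with $L|_{X_0}$ of degree $2$ on the fibres.

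Now Corollary~\ref{cor:1} applied to $\pi_0$ and $s|_{X_0}$ produces some $i$ with $H^0(X_0,T_{X_0/Y_0}^{\otimes i}\otimes(\pi_0^\ast T_{Y_0})^{\otimes p-i}\otimes L|_{X_0}^{-p})\neq 0$; restricting to a general fibre $\cong\P^1$ gives $H^0(\P^1,\O_{\P^1}(2i-2p))\neq 0$, so $i=p$, i.e. $H^0(X_0,T_{X_0/Y_0}^{\otimes p}\otimes L|_{X_0}^{-p})\neq 0$. From here I would argue exactly as in the proof of Theorem~\ref{theo:2}: if $\pi_0^\ast\Omega^1_{Y_0}$ is nef on an $MR$-general curve $C\subset X_0$, then base-changing along $C\rightarrow Y_0$ and using the relative Euler sequence one gets $T_{\P(E_C)/C}\otimes\psi^\ast L|_{X_0}^{-1}\cong\pi_C^\ast(\det E_C\otimes D)^{-1}$ for a line bundle $D$ on $C$, whence $H^0(C,(\det E_C\otimes D)^{-p})\neq 0$, contradicting the fact that ampleness of $\psi^\ast L|_{X_0}\cong\O_{\P(E_C)}(2)\otimes\pi_C^\ast D$ forces $\deg(\det E_C\otimes D)>0$; and if it is not nef, Proposition~\ref{prop:1} makes a projective compactification $Y$ of $Y_0$ uniruled, so choosing a minimal covering family $H'$ on $Y$, lifting a general member to $X$, and base-changing along it to a $\P^1$, Lemma~\ref{lem:3} applies with $\O_{\P(h^\ast E)}(-p-q)=\O_{\P(h^\ast E)}(-2p)$ and yields $q=0$, i.e. $p=2p$, a contradiction. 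Either way $l=2$ is excluded, so $l=1$, and then $H$ is complete by \cite[Theorem~2.6]{ADK}. The main obstacle is precisely this $l=2$ step: extracting the rank-one rigidity that kills $d$, and then running the $H$-rc-quotient argument while keeping control of properness — which is exactly what the completeness formalism of \cite{ADK} together with Proposition~\ref{prop:1} provide.
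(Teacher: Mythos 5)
Your opening is exactly the paper's: restricting a nonzero section to a general curve of $H$ gives $2p-lp\geq 0$, hence $L\cdot\ell\in\{1,2\}$, and when $L\cdot\ell=2$ the section forces all general $H$-curves through a general point $x$ to share their tangent direction at $x$ (the paper makes the same observation, and degree $1$ does give completeness). The gap is in how you exclude the case $l=2$. You pass from $d=0$ to \og the $H$-rc quotient has one-dimensional fibres $\cong\P^1$, $X_0\cong\P(E)$ with $E$ of rank $2$, and $\codim(X\setminus X_0)\geq 2$ by \cite[Theorem 2.6]{ADK}\fg, and then rerun the machinery of the proof of Theorem \ref{theo:2} (MR-general curves in $X_0$ for Proposition \ref{prop:1}, lifting a minimal family of $Y$ into $Y_0$, Lemma \ref{lem:3}). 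But \cite[Theorem 2.6]{ADK} and this whole formalism require the family $H$ to be compl\`ete (unsplit), which is precisely what Proposition \ref{prop:2} is trying to establish: with $L\cdot\ell=2$ a curve of $H$ may a priori degenerate into two components of $L$-degree $1$, so neither the codimension-$2$ statement nor the properness needed for Proposition \ref{prop:1} is available. Independently, the inference \og $d=0$ hence the general fibre of the rc quotient is $\P^1$\fg{} is unjustified: the fibre is an equivalence class for \emph{chains} of $H$-curves and can be strictly larger than the locus swept by curves through a general point (for lines on $Q_3$ the fibre is all of $Q_3$, of dimension $3>d+1$); and even granting one-dimensional fibres, $L$ has degree $2$ on them, so there is no canonical rank-two $E$ with $L|_{X_0}$ a twist of $\O_{\P(E)}(2)$, and the ampleness of $h^\ast E$ and nefness of the auxiliary twist demanded by Lemma \ref{lem:3} are not established. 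Since this is the entire content of the statement, the proof as proposed does not go through.

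For comparison, the paper never touches the rc quotient here. From the same tangent-direction rigidity it builds a foliation by curves $G\subset T_X$ with $L^p\subset G^p\subset T_X^{\otimes p}$ and $G\cdot\tilde\ell=L\cdot\tilde\ell=2$, then transfers the problem to a surface: it takes a curve $C$ of genus $\geq 1$ obtained from $\Hom^n_{\birat}(\P^1,X)$, lets $S\subset X\times C$ be the closure of the induced one-parameter family of $H$-curves, pushes the foliation and the line bundle to the normalization and a minimal desingularization $\bar S$, and, after contracting $(-1)$-curves (which lie in fibres because $g(C)\geq 1$), lands on a geometrically ruled surface over $C$ carrying a nef and big $L_{\bar S}$ with $L_{\bar S}\cdot f=2$ and $H^0(\bar S,T_{\bar S}^{\otimes p}\otimes L_{\bar S}^{-p})\neq 0$, contradicting the elementary Proposition \ref{prop:3}. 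If you want to keep your route, you would have to prove, without assuming completeness of $H$, both the codimension-$2$ property and the $\P^1$-fibration structure of the quotient; that is the missing substance, and it is what the foliation-plus-ruled-surface argument is designed to avoid.
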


\begin{proof}
Soit $\ell \in H$ un \'el\'ement g\'en\'eral et $\tilde{\ell}\cong\P^1$ sa normalis\'ee. On a alors $T_X|_{\tilde{\ell}} \cong T_{\tilde{\ell}} \oplus \O_{\P^1}(1)^{\oplus d} \oplus  \O_{\P^1}^{\oplus d'}$ pour des entiers $d,d' \geq 0$, avec  $T_{\tilde{\ell}} \cong \O_{\P^1}(2)$. Comme $H^0(\tilde{\ell}, (T_X|_{\tilde{\ell}})^{\otimes p} \otimes (L|_{\tilde{\ell}})^{-p}) \neq 0$, on a un morphisme non nul de faisceaux :
\[
\alpha : (L|_{\tilde{\ell}})^p \rightarrow (T_X|_{\tilde{\ell}})^{\otimes p} .
\]
Comme $(T_X|_{\tilde{\ell}})^{\otimes p}\cong \O_{\P^1}(2p) \oplus \bigoplus_i \O_{\P^1}(a_i)$ avec des entiers $a_i< 2p$, le degr\'e de $(L|_{\tilde{\ell}})^p$ est inf\'erieur ou \'egal \`a $2p$. De plus $L$ est ample, deux cas peuvent donc se pr\'esenter: soit $L|_{\tilde{\ell}} \cong \O_{\P^1}(1)$ (auquel cas $H$ est compl\`ete),
soit $L|_{\tilde{\ell}} \cong \O_{\P^1}(2)$. 

On va montrer que le deuxi\`eme cas est exclu : supposons par l'absurde que $L|_{\tilde{\ell}} \cong \O_{\P^1}(2)$. On a alors pour $ x\in \tilde{\ell}$ g\'en\'eral $\Im \alpha_x \cong T_{\tilde{\ell},x}^{\otimes p}$, et on en d\'eduit en particulier que $T_{\tilde{\ell},x}$ ne d\'epend pas de la courbe g\'en\'erale $\tilde{\ell}$ passant par $x$. On consid\`ere $\pi_{\U}: {\U} \rightarrow H$ la famille universelle associ\'ee \`a $H$, et $\eta_{\U} : \U \rightarrow X$ le morphisme naturel. On a un morphisme de faisceaux $\eta_{\U}^\ast \Omega^1_X \rightarrow \Omega^1_{\U} $ induit par la diff\'erentielle de $\eta_{\U}$, qui compos\'e avec la surjection $\Omega^1_{\U} \rightarrow \Omega^1_{\U/H}$ donne un morphisme $\eta_{\U}^\ast \Omega^1_X \rightarrow \Omega^1_{{\U}/H}$. Comme $\Hom(\eta_{\U}^\ast\Omega^1_X, \Omega^1_{\U/H}) \cong \Hom(\Omega^1_X, \eta_{\U\ast}\Omega^1_{\U/H})$ on a donc un morphisme 
\[
\varphi : \Omega^1_X \rightarrow \eta_{\U\ast}\Omega^1_{\U/H}.
\]
Notons $\F$ l'image de ce morphisme. Soit $x \in X$ un point g\'en\'eral, on note $\U_x:=\eta_{\U}^{-1}(x)$. On a alors un isomorphisme 
\[
\eta_{\U\ast}\Omega^1_{\U/H} \otimes \kk(x) \cong H^0(\U_x,\Omega^1_{\U/H}|_{\U_x}).  
\]
Si  $\omega \in \Omega^1_{X} \otimes \kk(x)$ alors l'image de $\omega$ par $\varphi_x : \Omega^1_X\otimes \kk(x) \rightarrow H^0(\U_x,\Omega^1_{\U/H}|_{\U_x})$ s'identifie \`a l'application qui \`a $u\in \U_x$ associe $(\omega \circ d_u\eta_\U)|_{T_{\U/H}\otimes \kk(u)}$. Or $d_u\eta_\U(T_{\U/H}\otimes \kk(u))\cong T_{\tilde{\ell}}\otimes \kk(x)$ est engendr\'e par un vecteur $v_x$ qui ne d\'epend pas de la courbe g\'en\'erale ${\tilde{\ell}}$ passant par $x$.
On a donc 
\[
\varphi_x (\omega) = 0  \ \Leftrightarrow  \ \omega(v_x) = 0,
\]
ce qui entra\^ine en particulier que le faisceau $\F$ est de rang 1. Si $G$ est la saturation du faisceau $\F^\ast$ dans $T_X$, alors $G$ est de rang $1$, et il est r\'eflexif, c'est donc un faisceau inversible d'apr\`es \cite[Proposition 1.9]{Har2}. 

On a construit de cette mani\`ere un feuilletage en courbes $\Omega^1_X \rightarrow G^{-1}$ sur $X$ dont les feuilles sont les courbes de la famille $H$ et tel que $L^p \subset G^p \subset T_X^{\otimes p}$. De plus on a $G\cdot \tilde{\ell}= L \cdot \tilde{\ell} = 2$ : en effet on a d'une part $G \cdot \tilde{\ell} \leq 2$ car $G|_{\tilde{\ell}} \hookrightarrow  T_X|_{\tilde{\ell}} \cong \O_{\P^1}(2) \oplus \O_{\P^1}(1)^{\oplus d} \oplus  \O_{\P^1}^{\oplus d'}$, et d'autre part $G^p \cdot \tilde{\ell} \geq L^p \cdot \tilde{\ell} = 2p$.

Le but de ce qui suit est d'obtenir une contradiction en se ramenant au cas d'une surface lisse. On rappelle qu'on a le diagramme commutatif suivant (voir \cite[II.2]{Kol} ):
\begin{equation}\label{diag}
\xymatrix{
 \P^1 \times \Hom^n_{\birat}(\P^1,X) \ar@/^0.6cm/[rr]^-\mu \ar[r]_-{U} \ar[d]& \Univ^{rc}(X) \ar[r]_-{\eta} \ar[d]^{\pi} & X \\
 \Hom^n_{\birat}(\P^1,X) \ar[r]_-{u} & \RatCurves^n(X)
}
\end{equation}
Soit $T$ la normalis\'ee d'une courbe irr\'eductible contenue dans $u^{-1}(H)$ et non contract\'ee par $u$, et soit $C$ une compactification projective lisse de $T$. On peut choisir la courbe $C$ de sorte que $g(C) \geq 1$. Le morphisme $\mu$ induit une application rationnelle 
 \[
\xymatrix{\P^1 \times C \ar@{-->}[r]^{\varphi} & X \times C} .
\]
On note $S\subset X \times C$ l'adh\'erence de l'image de $\varphi$, $p : X \times C \rightarrow X$ et $q: X \times C \rightarrow C$ les deux projections. Par construction de $G$, si $\tilde{\ell}$ est la normalisation d'une courbe g\'en\'erale $\ell \in H$ l'application compos\'ee
\[
\I_{\tilde{\ell}}/\I_{\tilde{\ell}}^2 \rightarrow \Omega^1_X |_{\tilde{\ell}}  \rightarrow G^{-1}|_{\tilde{\ell}}
\] 
est identiquement nulle. L'application compos\'ee
\[
\I_S/\I_S^2 \rightarrow \Omega^1_{X \times C}|_S \cong p^\ast\Omega^1_X|_S\oplus q^\ast\Omega^1_C|_S \rightarrow p^\ast\Omega^1_X|_S \rightarrow p^\ast G^{-1}|_S
\] 
est donc nulle sur un ouvert dense de $S$, et comme le faisceau $p^\ast G^{-1}|_S$ est sans torsion, elle est identiquement nulle. Il existe donc une factorisation
\[
\xymatrix{ \Omega^1_{X \times C}|_S \ar[rr]\ar[rd] & & p^\ast G^{-1}|_S\\ 
 & \Omega^1_S \ar[ru]&  }
\]
On obtient un feuilletage en courbes $\Omega^1_S \rightarrow G_S^{-1}:=p^\ast G^{-1}|_S$ sur la surface $S$. On notera dans la suite $L_S:=p^\ast L|_S$. Comme $L$ est ample et $p|_S$ est g\'en\'eriquement fini, $L_S$ est nef et grand. On remarque de plus que l'on a $L_S^p \subset G_S^p$. Si $f$ est une fibre g\'en\'erale de $q|_S$, la restriction de $p$ \`a $f$ est birationnelle sur son image, que l'on note $C$, et on a 
\[
L_S \cdot f = (p^\ast L|_S)\cdot f = L \cdot C = 2.
\]
On a donc \'egalement $G_S \cdot f =2$.

Soit $n:S'\rightarrow S$ la normalisation de $S$. D'apr\`es \cite[Lemme 1.2]{Dru} il existe un unique feuilletage $\Omega^1_{S'} \rightarrow n^\ast G_S^{-1}$ sur $S'$ qui \'etend $n^\ast \Omega^1_S \rightarrow n^\ast G_S^{-1}$. 

Soit $d:\bar{S}\rightarrow S'$ une d\'esingularisation minimale de $S'$. D'apr\`es \cite[Proposition 1.2]{BW}, on a un isomorphisme naturel $d_\ast T_{\bar{S}} \tilde{\rightarrow} (\Omega^1_{S'})^\ast$, donc il existe un unique feuilletage $\Omega^1_{\bar{S}} \rightarrow d^\ast n^\ast G_S^{-1}$ sur $\bar{S}$ qui \'etend $d^\ast  \Omega^1_{S'} \rightarrow d^\ast n^\ast G_S^{-1}$. Notons $G_{\bar{S}}:=d^\ast n^\ast G_S$, $L_{\bar{S}}:=d^\ast n^\ast L_S$, $p_{\bar{S}}:=p \circ n \circ d : \bar{S} \rightarrow X$ et $q_{\bar{S}}:=q \circ n \circ d : \bar{S} \rightarrow C$. Le morphisme $n \circ d$ \'etant g\'en\'eriquement fini, le fibr\'e $L_{\bar{S}}$ est encore nef et grand.

En r\'esum\'e, on a obtenu un feuilletage $\Omega^1_{\bar{S}} \rightarrow G_{\bar{S}}^{-1}$ sur la surface lisse $\bar{S}$, et un fibr\'e en droites nef et grand $L_{\bar{S}}$ tels que $L_{\bar{S}}^p \subset G_{\bar{S}}^p$. On a donc 
\[
H^0(\bar{S}, T_{\bar{S}}^{\otimes p} \otimes L_{\bar{S}}^{-p}) \neq 0.
\]
De plus pour $f\cong \P^1$ une fibre g\'en\'erale de $q_{\bar{S}}$ on a $L_{\bar{S}}\cdot f =G_{\bar{S}}\cdot f =2$.

Supposons que $\bar{S}$ n'est pas minimale, et consid\`erons $\varepsilon : \bar{S} \rightarrow S_1$ la contraction d'une $(-1)$-courbe. La $(-1)$-courbe contract\'ee ne domine pas $C$ car $g(C)\geq 1$, elle est donc contenue dans une fibre de $q_{\bar{S}}$, et on a une factorisation :
\[
\xymatrix{ \bar{S} \ar[rr]^\varepsilon \ar[rd]_{q_{\bar{S}}} & & S_1 \ar[ld]^{q_1}\\ 
 & C &  }
\]
 
Il existe un fibr\'e en droites $L_1$ sur $S_1$ et un entier $k$ tel que $L_{\bar{S}}= \varepsilon^\ast L_1(-kE)$, o\`u l'on a not\'e $E$ le diviseur exceptionnel de $\varepsilon$. Si $f$ est une fibre g\'en\'erique de $q_1$, on a $L_1\cdot f = 2$. Comme $L_{\bar{S}}$ est nef, on a $k\geq 0$, et $L_1$ est nef. De plus comme $L_{\bar{S}}$ est grand on a $L_{\bar{S}}^2= L_1^2 -k^2 >0$, d'o\`u $L_1^2 >0$, donc $L_1$ est grand.

On a une injection $T_{\bar{S}} \hookrightarrow \varepsilon^\ast T_{S_1}$ donc 
\[
H^0(\bar{S},\varepsilon^\ast T_{S_1}^{\otimes p}\otimes \varepsilon^\ast L_1^{-p}\otimes \O_{S_1}(pk E) )\neq 0, 
\]
et comme $\varepsilon_\ast \O_{\bar{S}}(pk E) \cong \O_{S_1}$ on en d\'eduit que
\[
H^0(S_1, T_{S_1}^{\otimes p}\otimes  L_1^{-p}) \neq 0. 
\]

On peut donc supposer que $\bar{S}$ est minimale, et que 
\[
H^0(\bar{S}, T_{\bar{S}}^{\otimes p} \otimes L_{\bar{S}}^{-p}) \neq 0,
\]
ce qui contredit la proposition (\ref{prop:3}) et termine la d\'emonstration. 

\end{proof}

\begin{prop}\label{prop:3}
Soit $\pi: S \rightarrow C$ une surface r\'egl\'ee. On note $f$ une fibre de $\pi$. Si $L$ est un fibr\'e en droites nef et grand tel que $L\cdot f=2$, alors pour tout $p\geq 1$
\[
H^0(S,T_{S}^{\otimes p}\otimes L^{-p} ) = 0. 
\] 
\end{prop}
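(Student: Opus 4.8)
The plan is to exploit the structure of a geometrically ruled surface and reduce everything to a degree computation on $C$. Since $\pi : S \to C$ is a ruled surface, we may write $S \cong \P(E)$ for a vector bundle $E$ of rank $2$ on $C$, so that $\Pic(S) \cong \mathbb{Z}\,\O_{\P(E)}(1) \oplus \pi^\ast\Pic(C)$. Restricting an arbitrary line bundle to a fibre $f \cong \P^1$ and using $\O_{\P(E)}(1)|_f \cong \O_{\P^1}(1)$, the hypothesis $L\cdot f = 2$ shows that $L \cong \O_{\P(E)}(2)\otimes\pi^\ast B$ for some $B \in \Pic(C)$. From the relative Euler sequence
\[
0 \rightarrow \Omega^1_{S/C} \rightarrow \pi^\ast E \otimes \O_{\P(E)}(-1) \rightarrow \O_S \rightarrow 0,
\]
taking determinants gives $\Omega^1_{S/C} \cong \O_{\P(E)}(-2)\otimes\pi^\ast\det E$, hence $T_{S/C} \cong \O_{\P(E)}(2)\otimes\pi^\ast(\det E)^{-1}$.

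Next I would argue by contradiction: suppose $H^0(S, T_S^{\otimes p}\otimes L^{-p}) \neq 0$. Applying Corollaire \ref{cor:1} to the smooth morphism $\pi : S \to C$ and the line bundle $L^{-p}$ produces an integer $i \in \{0,\ldots,p\}$ such that
\[
H^0\big(S,\ T_{S/C}^{\otimes i}\otimes(\pi^\ast T_C)^{\otimes p-i}\otimes L^{-p}\big) \neq 0.
\]
Inserting the formulas above, the bundle in question is isomorphic to $\O_{\P(E)}(2i-2p)\otimes\pi^\ast\big((\det E)^{-i}\otimes T_C^{\otimes p-i}\otimes B^{-p}\big)$. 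Pushing forward along $\pi$ by the projection formula, and using that $\pi_\ast\O_{\P(E)}(m) \cong \Sym^m E$ vanishes for $m < 0$, one finds that this cohomology group is zero whenever $i < p$, while for $i = p$ it equals $H^0\big(C,\ (\det E\otimes B)^{-p}\big)$.

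It remains to treat the case $i = p$, which is where the assumption that $L$ is nef and big genuinely enters, and which I regard as the only real (if mild) point of the argument. Writing $\xi := c_1(\O_{\P(E)}(1))$ and $F$ for the class of a fibre, one has $\xi^2 = \deg E$, $\xi\cdot F = 1$ and $F^2 = 0$, so that
\[
L^2 = \big(2\xi + (\deg B)\,F\big)^2 = 4\big(\deg E + \deg B\big).
\]
Since a nef line bundle on a smooth projective surface is big if and only if its self-intersection is positive, $L^2 > 0$ forces $\deg(\det E\otimes B) = \deg E + \deg B > 0$; hence $(\det E\otimes B)^{-p}$ has negative degree and no nonzero global section. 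This contradicts the nonvanishing displayed above for $i = p$, which completes the proof. The only steps requiring care are the sign conventions in the relative Euler sequence (equivalently the canonical bundle formula $\omega_{S/C} \cong \O_{\P(E)}(-2)\otimes\pi^\ast\det E$) and the computation of $L^2$; everything else is formal.
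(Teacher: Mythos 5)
Your proof is correct and follows essentially the same route as the paper's: write $S\cong\P(E)$, use Corollaire \ref{cor:1} together with $T_{S/C}\cong\O_{\P(E)}(2)\otimes\pi^\ast(\det E)^{-1}$ to force $i=p$, and then derive the contradiction from $L^2>0$, which gives $\deg(\det E\otimes B)>0$ so that $(\det E\otimes B)^{-p}$ has no sections. The only differences are cosmetic (you do not normalise $E$ and you rule out $i<p$ via $\pi_\ast\O_{\P(E)}(m)=0$ for $m<0$ rather than by the fibre-degree inequality), and both are fine.
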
 

\begin{proof} On suppose par l'absurde que $H^0(S,T_{S}^{\otimes p}\otimes L^{-p} ) \neq 0$.
D'apr\`es le corollaire \ref{cor:1} il existe alors un entier $i\in\{0,\ldots, p\}$ tel que
\begin{equation}\label{eq:4}
H^0(S,T_{S/C}^{\otimes i} \otimes (\pi^\ast T_C)^{p-i} \otimes L^{-p} ) \neq 0.
\end{equation}

On \'ecrit $S\cong \P(E)$, avec $E$ un fibr\'e vectoriel de rang $2$ normalis\'e (voir \cite[2.8.1]{Har}). On note $e = \deg( \det E ^\ast )$. Soit $C_0$ l'image d'une section de $\pi$ telle que $\O_{S}(C_0) \cong \O_{\P(E)}(1)$. On a $C_0^2=-e$, et $C_0\cdot f=1$. Comme $L\cdot f=2$, on a $L \cong \O_{\P(E)}(2) \otimes \pi^\ast A$ avec $A\in \Pic(C)$. De plus $L$ \'etant nef et grand on a $L^2 > 0$, donc $(2C_0 +\deg(A)f)^2=4(-e + \deg(A)) >0$. 

Par ailleurs d'apr\`es \cite[Lemma V.2.10]{Har} on a $T_{S/C}\cong \O_{\P(E)}(2) \otimes \pi^\ast (\det E ^\ast)$, donc on a d'apr\`es (\ref{eq:4})
\[
H^0(S,\O_{\P(E)}(2i-2p)) \otimes \pi^\ast(\det(E)^{-i} \otimes T_C^{p-i} \otimes A^{-i} )) \neq 0.
\]
On en d\'eduit que $2i-2p \geq 0$, d'o\`u $i=p$. On a alors
\[
\begin{array}{lcl}
H^0(S,T_{S/C}^{\otimes p}\otimes L^{-p}) &\cong &H^0(S,\pi^\ast(\det(E)^{-p} \otimes A^{-p} ))\\
 &\cong& H^0(C,(\det(E)\otimes A)^{-p} ) .
\end{array} 
\]
Or le faisceau $(\det(E)\otimes A)^{-p} $ est de degr\'e $-p( -e +\deg(A)) < 0$, donc n'a pas de section non nulle, et on a une contradiction.
\end{proof}

On est maintenant en mesure de d\'emontrer le th\'eor\`eme principal.

\begin{proof}[D\'emonstration du th\'eor\`eme \ref{theo:1}]
On proc\`ede par r\'ecurrence sur $n$. Si $n=1$, on voit facilement que $X \cong \P^1$ et que $L\cong \O_{\P^1}(1)$ ou $L\cong \O_{\P^1}(2)$. 

Si $n\geq 2$, $X$ est unir\'egl\'ee d'apr\`es \cite[Corollary 8.6]{Miy}. Soit $H\subset \RatCurves^n(X)$ une famille couvrante minimale de courbes rationnelles sur $X$. D'apr\`es la proposition \ref{prop:2}, $L$ est de degr\'e 1 sur les courbes de $H$, et $H$ est compl\`ete. 

Si $\rho(X)=1$, le r\'esultat est donn\'e par \cite[Theorem 6.3]{ADK}. On suppose donc $\rho(X)\geq 2$, le but de ce qui suit \'etant de d\'emontrer que $X \cong \P^1 \times \P^1$.

Consid\'erons  $\pi_0:X_0 \rightarrow Y_0$ le quotient $H$-rc de $X$, on a $\dim Y_0 \geq 1$ d'apr\`es \cite[IV.3.13.3]{Kol}. Quitte \`a remplacer $Y_0$ par un ouvert plus petit, on peut supposer que $Y_0$ et $\pi_0$ sont lisses. D'apr\`es le corollaire \ref{cor:1} il existe $i\in\{0,\ldots,p\}$ tel que 
\[
H^0(X_0, T_{X_0/Y_0}^{\otimes i} \otimes  (\pi_0^\ast T_{Y_0})^{\otimes p-i}\otimes L|_{X_0}^{-p}) \neq 0.
\]
En restriction \`a une fibre $f$ de $\pi_0$ on obtient donc:
\[
H^0(f , T_{f}^{\otimes i} \otimes L|_f^{-p}) \neq 0 
\]

$\bullet$ \textbf{Premier cas : $i<p$}

Dans ce cas on a $f\cong \P^d$ et $L|_f \cong \O_{\P^d}(1)$ d'apr\`es le th\'eor\`eme \ref{theo:2}. Si $E=\pi_{0\ast}(L|_{X_0})$ on a alors $X_0 \cong \P(E)$ et $L|_{X_0}\cong \O_{\P(E)}(1)$. De plus d'apr\`es \cite[Theorem 2.6]{ADK} on peut supposer que $\codim(X\setminus X_0) \geq 2$ puisque la famille $H$ est compl\`ete. 

De m\^eme que dans la d\'emonstration du th\'eor\`eme \ref{theo:2} on peut voir que si $Y$ est une compactification projective de $Y_0$, alors $Y$ est unir\'egl\'ee, et qu'il existe une famille couvrante minimale de courbes rationnelles $K$ sur $Y$ telle qu'une courbe g\'en\'erale de la famille $K$ soit enti\`erement contenue dans $Y_0$. Si $[g] \in K$ un \'el\'ement g\'en\'eral, si $Z:= \P(g^\ast E)$, et si $\rho : Z \rightarrow \P^1$ est la projection naturelle on a
\[ 
H^0(Z , T_{Z/\P^1}^{\otimes i} \otimes \rho^\ast(g^\ast T_{Y_0})^{\otimes p-i}\otimes  \O_{\P(g^\ast E)}(-p) ) \neq 0.
\]
Par cons\'equent le lemme \ref{lem:3} entra\^ine que $g^\ast E \cong \O_{\P^1}(1) \oplus \O_{\P^1}(1)$, et que $ Z \cong \P^1 \times \P^1$. La vari\'et\'e $X$ admet donc une famille couvrante de courbes rationnelles $H' \subset \RatCurves^n(X)$ dont un \'el\'ement g\'en\'eral correspond \`a une droite de $Z  \cong \P^1 \times \P^1$ non contract\'ee par $\rho$. De plus $L|_Z \cong \O_{\P^1\times \P^1}(1,1)$ est de degr\'e $1$ sur les courbes de la famille $H'$, donc cette famille est compl\`ete.

Soit $\phi : X' \rightarrow Y'$ le quotient (H,H')-rc de $X$ (voir \cite{ADK} pour la d\'efinition), o\`u $X'$ est un ouvert dense de $X$. On peut supposer que $Y'$ et $\phi$ sont lisses. 
Une nouvelle application du corollaire \ref{cor:1} entra\^ine l'existence d'un entier $j\in\{0,\ldots,p\}$ tel que 
\[
H^0(X', T_{X'/Y'}^{\otimes j} \otimes  (\phi^\ast T_{Y'})^{\otimes p-j}\otimes L|_{X'}^{-p}) \neq 0.
\]
Si $f'$ est une fibre g\'en\'erale de $\phi$ on a donc
\[
H^0(f', T_{f'}^{\otimes j} \otimes  L|_{f'}^{-p}) \neq 0,
\]
ce qui entra\^ine par hypoth\`ese de r\'ecurrence que soit $f' \cong \P^d$, soit $f' \cong Q_d$, pour un entier $d\geq 2$. 

Supposons qu'on soit dans le premier cas : on peut appliquer encore une fois le raisonnement ci-dessus pour montrer que  $X' \cong\P(E')$ pour un certain fibr\'e vectoriel $E'$, puis qu'une compactification projective de $Y'$ est unir\'egl\'ee, et en se restreignant \`a une courbe rationnelle g\'en\'erale de $Y'$ on voit que $E'$ est de rang $2$, ce qui est impossible puisque les fibres de $\phi$ contiennent des sous-vari\'et\'es isomorphes \`a $ \P^1 \times \P^1$. 

On a donc $f' \cong Q_d$,  et $j=p$ d'apr\`es le th\'eor\`eme \ref{theo:2}, d'o\`u
\[ 
H^0(X', T_{X'/Y'}^{\otimes p} \otimes  L|_{X'}^{-p}) \neq 0.
\]
D'apr\`es \cite[Lemma 2.2]{ADK} on peut \'etendre $\phi$ en un morphisme \'equidimensionnel propre et surjectif avec des fibres irr\'eductibles et r\'eduites $\phi'' : X'' \rightarrow Y''$, o\`u $X''$ est un ouvert de $X$ dont le compl\'ementaire est de codimension au moins 2.

Supposons que $\dim Y'' \geq 1$. On consid\`ere alors la normalisation $C$ d'une courbe compl\`ete passant par un point g\'en\'eral de $Y''$, et on pose $X_C := X'' \times_{Y''} C$. On note $\phi_C:X_C \rightarrow C$ la projection naturelle et $L_C$ le tir\'e en arri\`ere \`a $X_C$ du fibr\'e $L$. D'apr\`es \cite[Corollary 5.5]{Fuj}, $\phi_C$ est un fibr\'e en quadriques, or on a
\[
H^0(X_C, T_{X_C/C}^{[\otimes p]} \otimes  L_C^{-p}) \neq 0, 
\]  
ce qui contredit la conclusion du th\'eor\`eme \ref{theo:4}. $Y''$ est donc r\'eduit \`a un point, et on a $X=f' \cong Q_d$. Enfin on a $d=2$ puisque $\rho(X) \geq 2$, d'o\`u $X \cong  \P^1 \times \P^1$.

\bigskip 
 
$\bullet$ \textbf{Deuxi\`eme cas : $i=p$}
Montrons que ce cas est impossible. On a 
\[
H^0(X_0, T_{X_0/Y_0}^{\otimes p} \otimes  L|_{X_0}^{-p}) \neq 0
\]
et 
\[
H^0(f , T_{f}^{\otimes p} \otimes L|_f^{-p}) \neq 0,
\]
donc par hypoth\`ese de r\'ecurrence $f \cong \P^d$ ou $f \cong Q_d$ pour un entier $d\geq 1$. De plus d'apr\`es la proposition \ref{prop:2} le cas o\`u $f\cong \P^1$ et $L|_f \cong \O_{\P^1}(2)$ est exclu.

Dans le cas o\`u $f \cong \P^d$, $\pi_0:X_0 \rightarrow Y_0$ est un fibr\'e projectif, et on peut supposer d'apr\`es \cite[Theorem 2.6]{ADK} que $\codim(X\setminus X_0) \geq 2$. Notons $C$ la normalisation d'une courbe compl\`ete passant par un point g\'en\'eral de $Y_0$, et $X_C := X_0 \times_{Y_0} C$. Soient $\pi_C:X_C \rightarrow C$ la projection naturelle et $L_C$ le tir\'e en arri\`ere \`a $X_C$ du fibr\'e $L$. Il existe alors un fibr\'e vectoriel $E_C$ sur $C$ tel que $X_C\cong \P(E_C)$ et $L_C \cong \O_{\P(E_C)}(1)$, et on a
\[
H^0(X_C, T_{X_C/C}^{\otimes p} \otimes  L_C^{-p}) \neq 0, 
\]  
ce qui entre en contradiction avec le th\'eor\`eme \ref{theo:3}. 
  
Dans le cas o\`u $f \cong Q_d$, d'apr\`es \cite[Lemma 2.2]{ADK} on peut \'etendre $\pi_0$ en un morphisme \'equidimensionnel propre et surjectif avec des fibres irr\'eductibles et r\'eduites $\pi':X' \rightarrow Y'$, avec $\codim(X\setminus X') \geq 2$. 

Soit $C$ la normalisation d'une courbe compl\`ete passant par un point g\'en\'eral de $Y'$, et soit $X_C := X' \times_{Y'} C$. On note $\pi_C:X_C \rightarrow C$  la projection naturelle, et $L_C$ est le tir\'e en arri\`ere \`a $X_C$ du fibr\'e $L$. D'apr\`es \cite[Corollary 5.5]{Fuj}, $\pi_C$ est un fibr\'e en quadriques, or on a
\[
H^0(X_C, T_{X_C/C}^{[\otimes p]} \otimes  L_C^{-p}) \neq 0, 
\]  
ce qui contredit cette fois encore le th\'eor\`eme \ref{theo:4}. 
\end{proof}

\end{document}